\documentclass{article}
\usepackage{graphicx,color} 
\usepackage{amsmath,amssymb,amsthm,enumitem}
\usepackage[labelfont=bf,textfont=sf]{caption}
\newcommand{\N}{\mathbb{N}}
\newcommand{\Z}{\mathbb{Z}}

\newcommand{\editfull}{\mathrm{d}_E}

\newcommand{\bb}[1]{b_{\mu}(#1)}
\newcommand{\len}{\mathrm{Len}}
\newcommand{\maxgap}{\mathrm{maxgap}}
\newcommand{\mingap}{\mathrm{mingap}}

\newtheorem{introtheorem}{Theorem}[]
\newtheorem{introproposition}[introtheorem]{Proposition}
\newtheorem{theorem}{Theorem}[section]

\newtheorem{lemma}[theorem]{Lemma}

\theoremstyle{definition}
\newtheorem{remark}[theorem]{Remark}

\title{Edit distance in substitution systems}
\author{Andrew Best and Yuval Peres}
\date{\today}

\begin{document}
	\maketitle
	\begin{abstract}
		Let $\sigma$ be a primitive substitution on an alphabet $\mathcal{A}$, and let $\mathcal{W}_n$ be the set of words of length $n$ determined by $\sigma$ (i.e., $w \in \mathcal{W}_n$ if $w$ is a subword of $\sigma^k(a)$ for some $a \in \mathcal{A}$ and $k \geq 1$). It is known that the corresponding substitution dynamical system is loosely Kronecker (also known as zero-entropy loosely Bernoulli), so the diameter of $\mathcal{W}_n$ in the edit distance is $o(n)$. %
		We improve this upper bound to
		$O(n/\sqrt{\log n})$. The main challenge is handling the case where $\sigma$ is non-uniform; a better bound is available for the uniform case.  Finally, we show that for the Thue--Morse substitution, the diameter of $\mathcal{W}_n$ is at least $\sqrt {n/6} - 1$.         
	\end{abstract}
	
	\section{Introduction}
	
	Let $\mathcal{A}$ be a finite alphabet, and let $\mathcal{A}^+ = \cup_{\ell\geq 1} \mathcal{A}^\ell$ be the set of all \textbf{words} in $\mathcal{A}$. Given a map $\sigma : \mathcal{A} \to \mathcal{A}^+$, we extend $\sigma$ to $\mathcal{A}^+$ by defining, for all $k \in \N$, 
	\begin{equation}
		\forall  a_1,\ldots, a_k \in \mathcal{A}, \qquad \sigma(a_1 \cdots a_k) \ := \ \sigma(a_1) \cdots \sigma(a_k) \,.
	\end{equation}
	Such a map $\sigma$ is called a {\bf substitution}  on  $\mathcal A$. If there is an $\ell \in \N$ such that $ \sigma(\mathcal A) \subset  \mathcal{A}^\ell$, then $\sigma$ is said to be \textbf{$\ell$-uniform}, or simply \textbf{uniform}. A substitution $\sigma$ is \textbf{primitive} if there exists $k \in \N$ such that for all $m \geq k$, the word $\sigma^{m}(a)$ contains every letter in $\mathcal{A}$, where $\sigma^m$ denotes the $m$-fold iterated composition of $\sigma$. For example, the primitive 2-uniform substitution $\mu$ on $\{0,1\}$ defined by $\mu(0) = 01$ and $\mu(1) = 10$ is called the \textbf{Thue--Morse substitution}.
	
	The \textbf{edit distance} $\editfull(x,y)$ between two words $x,y \in \mathcal{A}^+$, is defined to be half the minimal number of operations required to transform $x$ into $y$, where an operation is either the deletion or the insertion of a single letter at an arbitrary position. For example, $\editfull(10002,03004) = 2$ and $\editfull(000,111) = 3$.  
	
	We say a word $u = u_1\ldots u_n$ is a \textbf{subword} of a word $w = w_1\ldots w_m$ and write $u \prec w$ if 
	there exists $s \geq 0$ such that $u_{i} = w_{i+s}$ for each $i$. For each $n \in \N$, define $\mathcal{W}_n(\sigma) $ to be the collection of all words $w \in \mathcal{A}^n$ such that $w \prec \sigma^k(a)$ for some $k \in \N$ and $a \in \mathcal{A}$.
	
	Given a substitution $\sigma$ on $\mathcal{A}$,
	denote
	\begin{equation}
		\Omega(\sigma) \ := \ \{ \omega \in \mathcal{A}^\Z : \text{Every subword of $\omega$ belongs to } \bigcup_{n=1}^\infty \mathcal{W}_n(\sigma)\}\,.
	\end{equation}
	Observe that $\Omega(\sigma)$ is invariant under the shift $T$ given by $(T\omega)(n) := \omega(n+1)$. 
	The pair $(\Omega(\sigma),T)$ is called a \textbf{substitution dynamical system}; it is minimal if and only if $\sigma$ is primitive; see, e.g., \cite[Proposition 5.5]{queffelec}.
	
	Feldman~\cite{feldman} defined loosely Bernoulli systems. Following Ratner \cite{ratner84}, we will refer to zero-entropy loosely Bernoulli systems as loosely Kronecker. Loosely Kronecker systems are precisely those which are Kakutani equivalent to irrational rotations on the torus.
	
	It is known that all minimal substitution dynamical systems are loosely Kronecker, and since they are uniquely ergodic, this implies that, for these systems,
	\begin{equation}\label{main theorem sloppy version} \mathrm{diam}_E(\mathcal{W}_n(\sigma)) \ = \ o(n).
	\end{equation}
	See Section~\ref{subsec: related results} for details. Our first result is a quantitative improvement of \eqref{main theorem sloppy version}.
	\begin{introtheorem}\label{main theorem}
		Let $\sigma$ be a primitive substitution where $|\mathcal{A}| > 1$. Then
		\begin{equation}\label{non-uniform bound on diameter}
			\mathrm{diam}_E(\mathcal{W}_n(\sigma)) \ := \ \max_{w,\tilde{w} \in \mathcal{W}_n(\sigma)} \editfull(w,\tilde{w}) \ = \ O\Bigl(\frac{n}{\sqrt{\log n}}\Bigr).
		\end{equation}
		If $\sigma$ is uniform, then \eqref{non-uniform bound on diameter} can be improved: there exists $\alpha = \alpha(\sigma) \in (0,1)$ such that
		\begin{equation}
			\mathrm{diam}_E(\mathcal{W}_n(\sigma)) \ = \ O\bigl(n\alpha^{\sqrt{\log n}}\bigr).
		\end{equation}
	\end{introtheorem}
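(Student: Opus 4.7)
The plan is to exploit the self-similar structure of $\sigma$ via a hierarchical decomposition at a carefully chosen scale $K = K(n)$. The starting point is that any $w \in \mathcal{W}_n(\sigma)$ inherits a $\sigma^K$-block structure from the ambient word $\sigma^{K'}(a) = \sigma^K(\sigma^{K'-K}(a))$ of which it is a subword: one can write $w = p \cdot \sigma^K(c_1) \cdots \sigma^K(c_m) \cdot s$, with boundary pieces $p, s$ of length less than $\max_a |\sigma^K(a)|$ and with $c_1 \cdots c_m \in \mathcal{W}_m$. Given two words $w, \tilde w \in \mathcal{W}_n$ with such decompositions, I would bound $\editfull(w, \tilde w)$ by pairing $\sigma^K(c_i)$ with $\sigma^K(d_i)$ block-by-block, so that the boundary pieces and the $O(1)$ difference in block counts contribute an additive $O(\max_a |\sigma^K(a)|)$, while the bulk cost is $\sum_i \editfull(\sigma^K(c_i), \sigma^K(d_i))$.

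The crux is a quantitative form of primitivity: the goal is to produce a sequence $\epsilon(K) \to 0$ such that $\editfull(\sigma^K(a), \sigma^K(b)) \le \epsilon(K) \cdot \max_c |\sigma^K(c)|$ for all $a, b \in \mathcal{A}$. Primitivity, via Perron--Frobenius on the substitution matrix $M = (|\sigma(b)|_a)_{a,b}$, gives exponential convergence of letter frequencies in $\sigma^K(a)$ to the invariant distribution $\pi$, uniformly in $a$. I would upgrade this from letter to \emph{pattern} frequencies (for patterns of length up to $m(K)$, slowly growing with $K$), so that $\sigma^K(a)$ and $\sigma^K(b)$ share many common copies of common patterns that can be stitched together into a long common subsequence. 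Combined with the block decomposition, this yields $\editfull(w, \tilde w) \le \epsilon(K) \cdot n + O(\max_a |\sigma^K(a)|)$, and the announced rates are obtained by choosing $K$ so as to balance the bulk and boundary contributions; the specific exponents $\alpha^{\sqrt{\log n}}$ in the uniform case and $1/\sqrt{\log n}$ in general reflect the achievable rates of $\epsilon(K)$ in the two settings.

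The hard part will be the non-uniform case. When $\sigma$ is not $\ell$-uniform, the lengths $|\sigma^K(a)|$ vary with $a$: they concentrate near $\lambda^K$ (where $\lambda$ is the Perron eigenvalue of $M$), but with fluctuations that can be as large as a constant factor of $\lambda^K$. Consequently, the $\sigma^K$-block boundaries of $w$ and $\tilde w$ do not line up, even when the two words contain the same number of blocks, and realigning them with block-level insertions or deletions costs up to $\max_a |\sigma^K(a)| - \min_a |\sigma^K(a)|$ per pair of blocks. Controlling the accumulation of these mismatches across $\Theta(n/\lambda^K)$ blocks, most likely by a further grouping argument that averages the length fluctuations over longer stretches, is the technical heart of the argument and is what forces the weaker $O(n/\sqrt{\log n})$ bound in place of the cleaner uniform bound $O(n \alpha^{\sqrt{\log n}})$.
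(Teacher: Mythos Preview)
Your block-decomposition framework (writing $w = p \cdot \sigma^K(c_1)\cdots\sigma^K(c_m)\cdot s$ and balancing bulk against boundary) is correct and matches the paper's setup. The gap is in the bulk step: you reduce everything to producing a rate $\epsilon(K)$ with $\editfull(\sigma^K(a),\sigma^K(b)) \le \epsilon(K)\max_c|\sigma^K(c)|$, but this inequality \emph{is} the theorem, applied to the particular words $\sigma^K(a)$. Your proposed mechanism---Perron--Frobenius convergence of pattern frequencies, then ``stitching'' common patterns into a long common subsequence---does not close this loop. Two words can share all pattern frequencies up to a fixed length and still be at maximal edit distance (e.g.\ $0^m1^m$ versus $1^m0^m$), so frequency convergence by itself gives no edit-distance bound; the stitching step is where the entire difficulty sits, and you have not said how to do it.

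The paper's mechanism is different and avoids the circularity: it sets $r_k = \max\{\editfull(w,\tilde w)/\len(w) : w,\tilde w \in \mathcal W_n,\ n \in [\lambda^{k^2},\lambda^{(k+1)^2})\}$ and proves a \emph{recursion} $r_{k+1} \le (\text{small}) + (1-\gamma_k)\,r_k$. The saving $\gamma_k$ comes not from comparing $\sigma^{k^2}(c_i)$ to $\sigma^{k^2}(d_i)$ letter-by-letter, but from locating in both $w$ and $\tilde w$ many copies of the \emph{same} block $\sigma^{k^2}(b)$ (any fixed $b$ works, since $M_\sigma$ has a positive column after iteration) and then applying pigeonhole to the relative \emph{positions} of those copies. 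A popular shift class yields a positive fraction of exactly-matching block pairs, which contribute zero; the remaining pairs are bounded by $r_k$. In the uniform case the shift takes finitely many values, giving $\gamma_k$ constant and $r_k \le C\alpha^k$, hence $n\alpha^{\sqrt{\log n}}$. In the non-uniform case block lengths vary, so the shift is binned into $O(k)$ intervals of width $\theta\lambda^{k^2}/k$; pigeonhole then gives only $\gamma_k \asymp 1/k$, and the recursion $r_{k+1} \le C/k^2 + (1-2/k)r_k$ yields $r_k = O(1/k)$, i.e.\ $O(n/\sqrt{\log n})$. This binning-on-shifts argument is precisely the ``further grouping'' you allude to at the end, but it operates on positions of a single fixed block, not on averaging length fluctuations.
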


\medskip

For uniform substitutions, we can relax the primitivity assumption and characterize when \eqref{main theorem sloppy version} holds.

Given a substitution $\sigma$, define the directed graph $G_\sigma$ with vertex set $\mathcal{A}$ and an edge from $a$ to $b$ if and only if $\sigma(a)$ contains the letter $b$. Thus, $b$ is reachable from $a$ in $G$ if and only if $b$ appears in $\sigma^n(a)$ for some $n \in \N$.

A strongly connected component $G_0$ of $G_\sigma$ is called \textbf{essential} if there are no edges from vertices in   $G_0$ to vertices in  another strongly connected component. For each $a \in \mathcal{A}$, let
$T(a) :=  \{ n \in \N : a \text{ appears in } \sigma^n(a) \}$ and define the \textbf{period} of $a$ as the greatest common divisor of  $T(a)$. All letters in the same strongly connected component have the same period. Define a strongly connected component to be \textbf{aperiodic} if the common period of its vertices is 1.

\begin{introproposition}\label{introprop: characterization of uniform case}
	Let $\sigma$ be an $\ell$-uniform substitution with $\ell > 1$.
	\begin{enumerate}[label=(\alph*)]
		\item If there is a unique essential strongly connected component $G_0$ of $G_\sigma$, and $G_0$ is aperiodic, then $\mathrm{diam}_E(\mathcal{W}_n(\sigma)) \ = \ o(n)$.
		\item Otherwise, $\mathrm{diam}_E(\mathcal{W}_n(\sigma)) \ = \ n$ for all $n \in \N$.
	\end{enumerate}
\end{introproposition}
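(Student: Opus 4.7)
I split into two sub-cases. First, if $G_\sigma$ has two distinct essential SCCs $G_0$ and $G_1$, then essentiality forces $\sigma^k(a)\in G_j^{\ell^k}$ for every $a\in G_j$, so once $\ell^k\ge n$, $\mathcal{W}_n(\sigma)$ contains words $w_0\in G_0^n$ and $w_1\in G_1^n$ over the disjoint alphabets $G_0,G_1$; hence $\editfull(w_0,w_1)=n$. Second, if $G_0$ is the unique essential SCC but has period $p\geq 2$, I would use the partition of $G_0$ into cyclic classes $C_0,\ldots,C_{p-1}$ satisfying $\sigma(C_i)\subseteq C_{i+1\bmod p}^\ell$. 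Then $\sigma^k(a)\in C_{k\bmod p}^{\ell^k}$ for any $a\in C_0$, and varying $k\bmod p$ produces length-$n$ subwords in $\mathcal{W}_n(\sigma)$ lying entirely within each class $C_i$; a pair from two distinct classes is then at edit distance $n$.

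\textbf{Part (a).} Since $G_0$ is essential, the restriction $\sigma|_{G_0}$ is a well-defined $\ell$-uniform primitive aperiodic substitution, so Theorem~\ref{main theorem} gives $\mathrm{diam}_E(\mathcal{W}_n(\sigma|_{G_0}))=o(n)$ (the bound is vacuous when $|G_0|=1$). The main auxiliary fact I would establish is that the spectral radius $\lambda_T$ of the sub-transition matrix $M_T$ indexed by the transient letters $T:=\mathcal{A}\setminus G_0$ satisfies $\lambda_T<\ell$. Indeed, in the block-triangular decomposition of $M_T$ along transient SCCs, each irreducible diagonal block has some row summing to less than $\ell$---else that transient SCC would have no outgoing edges and would itself be essential, contradicting the uniqueness of $G_0$---so Perron--Frobenius yields $\lambda_T<\ell$. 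Consequently the number of transient letters in $\sigma^k(a)$ is at most $C\lambda_T^k=o(\ell^k)$ for a constant $C=C(\sigma)$.

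To bound $\editfull(w_1,w_2)$ for $w_1,w_2\in\mathcal{W}_n(\sigma)$, I would first fix embeddings $w_i\prec\sigma^{k_i}(a_i)$ with $k_i=\log_\ell n+O(1)$; this is possible since any length-$n$ subword spans at most two blocks in the level-$\lceil\log_\ell n\rceil$ decomposition of $\sigma^{k_i}(a_i)$, so it embeds into $\sigma^{\lceil\log_\ell n\rceil+s_0}(a')$ once $s_0=s_0(\sigma)$ is chosen so that every element of the finite set $\mathcal{W}_2(\sigma)$ appears in $\sigma^{s_0}$. Next, setting $m:=\lfloor(\log_\ell n)/2\rfloor$ and $L:=\ell^m$, I view each $\sigma^{k_i}(a_i)$ as a concatenation of $\ell^{k_i-m}$ blocks of length $L$, the $q$-th block being $\sigma^m(c_q)$ for $c_q$ the $q$-th letter of $\sigma^{k_i-m}(a_i)$. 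A block is ``$G_0$-type'' if $c_q\in G_0$---in which case it equals $\sigma|_{G_0}^m(c_q)\in\mathcal{W}_L(\sigma|_{G_0})$---and ``transient'' otherwise. Both $L\to\infty$ and $k_i-m\geq(\log_\ell n)/2\to\infty$, and the total length of transient blocks intersecting $w_i$ is at most $O(\lambda_T^{k_i-m}\cdot L)=O(n\cdot(\lambda_T/\ell)^{k_i-m})=o(n)$. After aligning the block decompositions of $w_1$ and $w_2$ sequentially (discarding at most two partial blocks at each end, which contribute $O(L)=o(n)$), each aligned pair of $G_0$-type blocks has edit distance $o(L)$ by Theorem~\ref{main theorem} applied to $\sigma|_{G_0}$; summing over the $\asymp n/L$ aligned pairs yields $o(n)$, giving $\editfull(w_1,w_2)=o(n)$ overall.

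The main obstacle is the alignment step: the scale $m$ must be chosen so that block lengths grow (ensuring the per-block bound from Theorem~\ref{main theorem} is $o(L)$) while the transient-block contribution remains $o(n)$, and the phase mismatch between $w_1$'s and $w_2$'s block decompositions---together with the partial blocks at the endpoints---must be absorbed into a negligible correction.
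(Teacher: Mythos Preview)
Your argument is correct and follows essentially the same route as the paper: for part~(a) you restrict to the primitive substitution $\sigma|_{G_0}$, decompose at the intermediate scale $L=\ell^{\lfloor(\log_\ell n)/2\rfloor}$, show that blocks generated by transient letters occupy total length $o(n)$ (you via the spectral gap $\lambda_T<\ell$ for the transient sub-matrix, the paper via the direct contraction estimate of Lemmas~\ref{lem: unif case, ess letters dominate sigma iterates}--\ref{lem: unif case, ess letters dominate general words}), and control the remaining $G_0$-type block pairs by Theorem~\ref{main theorem}; for part~(b) both proofs exhibit words over disjoint sub-alphabets arising from distinct essential components or distinct cyclic classes. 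The only structural difference is that the paper passes through a reference word in $\mathcal{W}_n(\sigma|_{G_0})$ and invokes the triangle inequality (Lemma~\ref{lem: bounding edit distance between ess and iness generated words, unif}), whereas you compare two arbitrary words directly---this is immaterial, and your reduction to $k_i=\log_\ell n+O(1)$, while harmless, is in fact unnecessary since larger $k_i$ only improves the transient bound.
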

\begin{remark}
	Proposition~\ref{introprop: characterization of uniform case} is similar to the ``zero-two theorem'' of \cite{os}. 
\end{remark}

For $w \in \{0,1\}^+$, let $\overline{w}$ be the bitwise complement of $w$, i.e., $\overline{w}_i=1-w_i$.

The Thue--Morse sequence $\mathbf{x}=(x_i)_{i=0}^\infty =(01101001\dots)$ is defined as follows. Set $x_0 = 0$, and for every $n \in \N \cup \{0\}$, let $x_{2n} = x_n$ and $x_{2n+1} = \overline{x}_n$.  
For each $n,m \in \N \cup \{0\}$ with $n < m$, write
\begin{align}
	x_{[n,m]} \ := \ x_{n}x_{n+1}\cdots x_{m} \quad \text{ and } \quad x_{[n,m)} \ := \ x_{n}x_{n+1}\cdots x_{m-1}.
\end{align}
Then $x_{[0,2^k)} = \mu^k(0)$ and $\overline{x}_{[0,2^k)} = \mu^k(1)$ for each $k \in \N$. See \cite{alloucheshallit} for a survey of many properties of this sequence.

Our final result is a lower bound for the edit distance: \begin{introproposition}\label{introprop: lower bound on edit distance for morse}
	For all $n \in \N$, we have $\editfull(x_{[0,n)},\overline{x_{[0,n)}}) \geq \sqrt{\frac{n}{6}} - 1$.
\end{introproposition}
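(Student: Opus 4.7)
The plan is to lower bound $\editfull(x_{[0,n)}, \overline{x_{[0,n)}}) = n - L$, where $L$ is the length of a longest common subsequence of $u := x_{[0,n)}$ and $\overline{u}$ (both have length $n$). I would view any common subsequence as a monotone lattice path from $(0,0)$ to $(n,n)$ in the edit graph, decomposed into maximal diagonal runs of lengths $\ell_1, \ldots, \ell_r$ at offsets $D_1, \ldots, D_r \in \Z\setminus\{0\}$ (nonzero because a diagonal match of $u_i$ with $\overline{u}_i$ would demand $u_i = 1 - u_i$). The strategy is to show, via the overlap-freeness of Thue--Morse, that $\ell_k \le 3|D_k|$, and then bound $\sum_k |D_k|$ by a walk-variation estimate.

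The key ingredient is a lemma: for every $d \ge 1$, any factor $y_0 y_1 \cdots y_{m-1}$ of the Thue--Morse sequence satisfying $y_i \neq y_{i+d}$ for all $i \in [0, m-d)$ has $m \le 4d$. To prove it, I would iterate $y_{i+d} = 1 - y_i$ to get $y_{i+2d} = y_i$, so that $y$ takes the form $w\overline{w}w\overline{w}\cdots$ with $w := y_0 y_1 \cdots y_{d-1}$. Assuming $m \ge 4d+1$, the prefix of length $4d+1$ equals $w\overline{w}w\overline{w} w_0$; using the $2d$-periodicity, this coincides with $w_0 \cdot v \cdot w_0 \cdot v \cdot w_0$ where $v := y_1 y_2 \cdots y_{2d-1}$---an overlap of the form $avava$. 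Since Thue--Morse is overlap-free (see \cite{alloucheshallit}), this is impossible. The $k$-th diagonal run corresponds to a factor of Thue--Morse of length $\ell_k + |D_k|$ satisfying the hypothesis, so the lemma yields $\ell_k \le 3|D_k|$, and hence $L = \sum_k \ell_k \le 3 \sum_k |D_k|$.

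To finish, set $V := 2(n-L)$, the number of non-diagonal moves in an optimal path. The offsets form a walk $D_0 = 0, D_1, \ldots, D_r, D_{r+1} = 0$ whose total variation equals $V$, partitioned into $r+1$ non-diagonal segments each of length $\ge 1$ (the first and last have length $\ge |D_1|,|D_r| \ge 1$, and any interior one must contain at least one move since consecutive diagonal runs cannot be contiguous). Hence $\max_k |D_k| \le V/2$ and $r \le V-1$, so $\sum_k |D_k| \le r \cdot \max_k |D_k| \le V^2/2$, giving $L \le 3V^2/2 = 6(n-L)^2$. Setting $\delta := n-L$ and solving $6\delta^2 + \delta \ge n$ yields $\delta \ge (\sqrt{1+24n}-1)/12 \ge \sqrt{n/6} - 1$, as required. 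The main obstacle I anticipate is the careful identification of $w\overline{w}w\overline{w}w_0$ with the overlap template $avava$ in the key lemma, which requires matching indices via the $2d$-periodicity; the walk-optimization step is then routine.
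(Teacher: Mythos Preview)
Your proof is correct and follows the same overall strategy as the paper: relate the edit distance to the longest common subsequence, use overlap-freeness of Thue--Morse to show that any contiguous match between $x$ and $\overline{x}$ at offset $d$ has length at most $3|d|$, then bound offsets in terms of the edit distance to obtain a quadratic inequality. The two arguments differ in packaging. For the key lemma, the paper proves the equivalent statement (Lemma~\ref{lem: perfect match in TM requires shift at least 1/3 of word length}) by induction, halving the offset and the match length via the relation $x_{2n}=x_n$; your direct identification of $w\overline{w}w\overline{w}w_0$ with the overlap $a v a v a$ (taking $a=y_0$, $v=y_1\cdots y_{2d-1}$) is shorter and more transparent. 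For the final combinatorial step, the paper isolates the single longest run (via pigeonhole on the at most $2\Delta$ ``jump'' positions) and bounds its offset by $\Delta$ using $|\pi(i)-\tau(i)|\le\Delta$, arriving at $6\Delta^2+4\Delta\ge n$; you instead sum over all runs and control $\sum_k|D_k|\le r\cdot\max_k|D_k|\le (V-1)\cdot V/2$ via the walk variation, arriving at the marginally sharper $6\delta^2+\delta\ge n$. Both routes yield the stated bound, and each is a clean execution of the same idea.
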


\subsection{Related Results}\label{subsec: related results}
For words $x, y \in \mathcal{A}^+$ of respective length $n$ and $m$, we have the basic identity
\begin{equation}\label{eqn: lcs and edit distance relation} \editfull (x,y) \ := \ \frac{n+m}{2} - S(x,y),
\end{equation}
where $S(x,y)$ is the length of a longest common subsequence of $x$ and $y$. 

In 1976, Feldman \cite{feldman}  defined loosely Bernoulli systems in terms of edit distance, observed that irrational rotations on the circle are loosely Kronecker, and showed there exists a zero-entropy ergodic automorphism that is not loosely Kronecker. Katok~\cite{katok} in 1977 independently defined and characterized loosely Kronecker systems (which he called ``standard''). Subsequently, several classes of dynamical systems were shown to have this property.

Ratner \cite{ratner78} showed in 1978 that horocycle flows are loosely Kronecker. Ornstein, Rudolph, and Weiss~\cite[Theorem 7.3]{orw} showed that any ergodic skew product over a loosely Kronecker system with rotations of a compact group is also loosely Kronecker. 

In 1982, N\"{u}rnberg~\cite{nuernberg}  proved that the orbit closures of generalized Morse sequences (defined in 1968 by Keane~\cite{keane}) are loosely Kronecker. In fact, N\"{u}rnberg's argument implies that \eqref{main theorem sloppy version} holds for all uniform substitutions on two letters $\{0,1\}$ such that $\sigma(0) = w$ and $\sigma(1) = \overline{w}$ for some word $w \in \{0,1\}^+$.  

If $(\Omega,T)$ is a uniquely ergodic subshift that is loosely Kronecker, then \eqref{main theorem sloppy version} holds with $\mathcal{W}_n(\sigma)$ replaced by $\{ x_1\ldots x_n \in \mathcal{A}^n : (x_i)_{i\in\Z} \in \Omega \}$; see the proof of \cite[Proposition 6.10]{orw}. 

Here is a known proof of the fact that every minimal substitution dynamical system is loosely Kronecker. By Proposition 5.12 in \cite{queffelec}, the word complexity of such a system is at most linear. Thus, by Proposition 4 in Ferenczi \cite{ferenczi}, the system has finite rank. Finally, by Theorem 8.4 in \cite{orw}, every ergodic map with finite rank is loosely Bernoulli.

Recently, loosely Kronecker (and more generally, loosely Bernoulli) transformations have received renewed attention. Glasner, Thouvenot, and Weiss \cite{gtw} showed that, generically, an extension of an ergodic loosely Bernoulli system is loosely Bernoulli. Gerber and Kunde \cite{gerberkunde2020} showed there exist smooth weakly mixing loosely Kronecker transformations whose Cartesian square is loosely Kronecker, and 
Trujillo \cite{trujillo} later showed there exist such transformations that are mixing. Garc\'{i}a-Ramos and Kwietniak \cite{g-rk} characterized uniquely ergodic dynamical systems that are loosely Kronecker. Their result uses the Feldman--Katok pseudometric, introduced earlier by Kwietniak and Łacka in \cite{kl}.   
Trilles \cite{trilles} proved an analogue of this characterization for continuous flows.

In 2020, Blikstad \cite{blikstad} proved a quantitative upper bound on the edit distance between a prefix of the Thue--Morse sequence and its complement, namely   \begin{equation}\label{Blikstad bound on bb}
	\editfull(x_{[0,n)},\overline{x_{[0,n)}}) \ = \ 
	O\bigl(\sqrt{\log n} \cdot 2^{-\beta\sqrt{\log n}}\bigr),
\end{equation}
where $\beta = \sqrt{2\log_2(3)} \approx 1.78$.

\subsection{Notation}
Given a substitution $\sigma$ on $\mathcal{A}$,  the $|\mathcal{A}| \times |\mathcal{A}|$ \textbf{incidence matrix} $M_\sigma$ is defined by setting the $(a,b)$ entry $M_\sigma(a,b)$ to be the number of instances of the symbol $b$ in the word $\sigma(a)$. A square matrix $M$ with nonnegative real entries is called \textbf{primitive} if there exists $k \in \N$ such that every entry of $M^k$ is positive. Then $\sigma$ is primitive if and only if $M_{\sigma}$ is primitive.

Write $\len(w)=\ell$ for the \textbf{length} of $w \in \mathcal{A}^\ell$.

\subsection{Organization}
In Sections~\ref{sec: proof of main theorem in uniform case} and \ref{sec: proof of main theorem in non-uniform case}, we prove Theorem~\ref{main theorem}, first in the uniform case (where we obtain better quantitative bounds for edit distance) and then in the non-uniform case. In Section~\ref{sec: proof of characterization in uniform case}, we study uniform substitutions that are not necessarily primitive and prove Proposition~\ref{introprop: characterization of uniform case}. In Section~\ref{sec: proof of edit distance lower bound}, we focus on the Thue--Morse sequence and prove Proposition~\ref{introprop: lower bound on edit distance for morse}; we also state a related open problem.

\section{Proof of Theorem~\ref{main theorem} in the uniform case}\label{sec: proof of main theorem in uniform case}
A basic property of edit distance we will use is that
\begin{equation}
	\editfull(xy,\tilde{x}\tilde{y}) \ \leq \ \editfull(x,\tilde{x}) + \editfull(y,\tilde{y}) 
\end{equation}
for all words $x,y,\tilde{x},\tilde{y}$.

Let $\sigma$ be an $\ell$-uniform substitution with $\ell > 1$. For integer $k \geq 2$, define
\begin{equation}\label{definition of r_k}
	r_k \ := \ \max\left\{
	\frac{\editfull(w,\tilde{w})}{\len(w)} : w,\tilde{w} \in \mathcal{W}_n(\sigma) \text{ for some } n \in [\ell^{k^2},\ell^{(k+1)^2}) \right\}.
\end{equation}

\begin{lemma}\label{lem: recursive estimate on r_k}
	Let $\ell>1$ and fix an $\ell$-uniform substitution $\sigma$ with incidence matrix having a nonzero column. For each integer $k \geq 2$,
	\begin{equation}\label{eqn in lem: recursive estimate on r_k}
		r_{k+1} \ \leq \ \frac{1}{\ell^{k}} + \left(1 - \frac{1}{2\ell^{2}}\right)r_k.
	\end{equation}
\end{lemma}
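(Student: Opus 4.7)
My plan is to decompose $w, \tilde w$ via the $\sigma$-block structure, reducing to a shorter pair for which $r_k$ applies. Let $w, \tilde w \in \mathcal{W}_n(\sigma)$ with $n \in [\ell^{(k+1)^2}, \ell^{(k+2)^2})$. I choose an integer $j \geq 1$ with $\ell^j \in (n/\ell^{(k+1)^2}, n/\ell^{k^2}]$; such $j$ exists since this interval has multiplicative width $\ell^{2k+1}$. Writing each of $w$ and $\tilde w$ as a subword of some $\sigma^m(a)$ and cutting along $\sigma^j$-block boundaries yields decompositions $w = u\, \sigma^j(v)\, u'$ and $\tilde w = \tilde u\, \sigma^j(\tilde v)\, \tilde u'$, where the four boundary fragments have length less than $\ell^j$ and $v, \tilde v \in \mathcal{W}(\sigma)$. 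By absorbing at most one extra $\sigma^j$-block into a boundary piece, I arrange $\len(v) = \len(\tilde v) = N$ with $N \in [\ell^{k^2}, \ell^{(k+1)^2})$, so that $\editfull(v, \tilde v)/N \leq r_k$.

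Subadditivity of $\editfull$ gives $\editfull(w, \tilde w) \leq \editfull(u, \tilde u) + \editfull(\sigma^j(v), \sigma^j(\tilde v)) + \editfull(u', \tilde u') \leq 2 \ell^j + \editfull(\sigma^j(v), \sigma^j(\tilde v))$. The boundary term contributes at most $2\ell^j/n \leq 2/\ell^{k^2}$ to the ratio $\editfull(w, \tilde w)/n$, which is bounded by $1/\ell^k$ for $k \geq 2$. The heart of the argument is to show
\[
\editfull(\sigma^j(v), \sigma^j(\tilde v)) \leq \left(1 - \frac{1}{2\ell^2}\right) \ell^j\, \editfull(v, \tilde v),
\]
which, divided by $n \geq \ell^j N$, yields the $(1 - 1/(2\ell^2)) r_k$ term. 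Here the nonzero-column hypothesis enters: it supplies a letter $b \in \mathcal{A}$ with $b$ appearing in $\sigma(a)$ for every $a$, so every block $\sigma^j(c)$ contains the common sub-block $\sigma^{j-1}(b)$ of length $\ell^{j-1}$. Hence the ``substitution cost'' $\editfull(\sigma^j(c), \sigma^j(c'))$ of replacing one $\sigma^j$-block by another is at most $\ell^j - \ell^{j-1}$ instead of $\ell^j$, saving $\ell^{j-1}$ per substituted block-pair over the naive ``delete all, insert all'' strategy.

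The main obstacle is making these savings rigorous in the form $\tfrac{1}{2\ell^2} \cdot \ell^j\, \editfull(v, \tilde v)$. Lifting an optimal edit sequence $v \to \tilde v$ produces $\editfull(v, \tilde v)$ deletion/insertion block-pairs, each a candidate for substitution with savings $\ell^{j-1}$; however, as small examples show (e.g., $v = 01$, $\tilde v = 10$ for Thue--Morse), not every lifted deletion can be paired compatibly with an insertion in the alignment. I expect the crux to be showing that a positive fraction, at least $1/(2\ell)$, of the lifted del/ins pairs can be matched to produce a substitution; multiplied by the per-pair savings $\ell^{j-1}$ and normalized by $\ell^j\, \editfull(v, \tilde v)$, this yields the factor $1/(2\ell^2)$. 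I anticipate that this fraction can be secured by choosing the LCS of $v, \tilde v$ to balance the lengths of gap-segments between matched letters, or alternatively by averaging over the $\ell^j$ possible $\sigma^j$-cut offsets within a single block of $\sigma^m(a)$.
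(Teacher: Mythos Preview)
Your proposal has a genuine gap at the ``heart of the argument.'' The displayed inequality
\[
\editfull\bigl(\sigma^j(v),\sigma^j(\tilde v)\bigr)\ \le\ \Bigl(1-\tfrac{1}{2\ell^2}\Bigr)\ell^j\,\editfull(v,\tilde v)
\]
is not only unproved in your write-up (you end with ``I expect'' and ``I anticipate''), it is false in general. For Thue--Morse ($\ell=2$) take $j=1$, $v=01$, $\tilde v=10$; then $\editfull(v,\tilde v)=1$ while $\editfull(\sigma(v),\sigma(\tilde v))=\editfull(0110,1001)=2>7/4$. The same obstruction persists for $v=(01)^m$, $\tilde v=(10)^m$, so it is not an artifact of short words. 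The difficulty you yourself flag---that lifted deletions and insertions may sit far apart and cannot be merged into a cheap block substitution---is exactly what kills the inequality; no averaging over cut offsets repairs it, because the inequality compares against $\editfull(v,\tilde v)$, which can be as small as~$1$.

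The paper avoids this entirely by never comparing $\sigma^j(v)$ to $\sigma^j(\tilde v)$ via $\editfull(v,\tilde v)$. Instead it goes one level finer: write $\sigma(x)=b_1\cdots b_{q\ell}$ and $\sigma(\tilde x)=\tilde b_1\cdots\tilde b_{q\ell}$, set $B_j=\sigma^{k^2}(b_j)$, and bound $\editfull(B_j,\tilde B_{d+j})$ \emph{block by block} using the definition of $r_k$ directly (each $B_j$ has length $\ell^{k^2}\in[\ell^{k^2},\ell^{(k+1)^2})$). The nonzero-column letter $b$ appears once in every $\ell$-block $b_{i\ell+1}\cdots b_{i\ell+\ell}$ and once in every $\tilde b_{i\ell+1}\cdots\tilde b_{i\ell+\ell}$; a pigeonhole over the $2\ell-1$ possible offsets $\tilde t_i-t_i$ produces a single shift $d$ for which at least $q/(2\ell-1)$ of the aligned pairs satisfy $B_j=\tilde B_{d+j}=\sigma^{k^2}(b)$ exactly, contributing zero. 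The remaining $\le q\ell(1-\tfrac{1}{2\ell^2})$ pairs each contribute at most $\ell^{k^2}r_k$. So the saving comes from \emph{exact block matches after a global shift}, not from squeezing the cost of individual block substitutions, and $r_k$ is applied many times on length-$\ell^{k^2}$ words rather than once on a length-$N$ pair.
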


\begin{proof}
	Let $k \geq 2$. Fix $n \in \N$ such that
	\begin{equation} \ell^{(k+1)^2} \ \leq \ n \ < \ \ell^{(k+2)^2},
	\end{equation}
	and fix $w,\tilde{w} \in \mathcal{W}_n(\sigma)$. Let $m,\tilde{m} \in \N$ and $\xi_0,\xi_1 \in \mathcal{A}$ be such that $w \prec \sigma^{m}(\xi_0)$ and $\tilde{w} \prec \sigma^{\tilde{m}}(\xi_1)$.
	
	Let $a_i \in \mathcal{A}$ be such that
	\begin{equation}
		\sigma^{m-k^2-1}(\xi_0) \ = \ a_1 \cdots a_{\ell^{m-k^2-1}}.
	\end{equation}
	Note that $\sigma^{m}(\xi_0) = \sigma^{k^2+1}(\sigma^{m-k^2-1}(\xi_0))$ is a concatenation of words $\sigma^{k^2+1}(a_i)$.
	Because $\sigma$ is $\ell$-uniform, $\len(\sigma^{k^2+1}(a_i)) = \ell^{k^2+1}$. Thus, since $w \prec \sigma^{m}(\xi_0)$ and
	\begin{equation}
		\len(w) \ \geq \ \ell^{(k+1)^2} \ \geq \ 2\ell^{k^2+1} - 1,
	\end{equation}
	$w$ contains at least one word $\sigma^{k^2+1}(a_i)$ as a subword. Hence, we may choose a maximal length subword $y$ of $\sigma^{m-k^2-1}(\xi_0)$ such that $\sigma^{k^2+1}(y) \prec w$. Note that
	\begin{equation}\label{inequality for q_0}
		\frac{n}{\ell^{k^2+1}} - 2 \ \leq \ \len(y) \ \leq \ \frac{n}{\ell^{k^2+1}}.
	\end{equation}
	
	Similarly, choose a maximal subword $\tilde{y} \prec \sigma^{\tilde{m}-k^2-1}(\xi_1)$ with $\sigma^{k^2+1}(\tilde{y}) \prec \tilde{w}$. 
	Let $x$ (resp. $\tilde{x}$) consist of the first $q := \lceil n/\ell^{k^2+1}-2\rceil$ letters of $y$ (resp. $\tilde{y}$). Then
	\begin{equation}\label{inequality for q}
		\frac{n}{\ell^{k^2+1}} - 2 \ \leq \ q \ \leq \ \frac{n}{\ell^{k^2+1}}.
	\end{equation}  
	
	Write $\sigma(x) = b_1\cdots b_{q\ell}$ and $\sigma(\tilde{x}) = \tilde{b}_1 \cdots \tilde{b}_{q\ell}$, where $b_i, \tilde{b}_i \in \mathcal{A}$.  
	
	Since $M_\sigma$ has a nonzero column, there exists $b \in \mathcal{A}$ such that $b \prec \sigma(a)$ for each $a \in \mathcal{A}$. Thus, for each $i < q$, the word $b_{i\ell+1}\cdots b_{i\ell+\ell}$ must contain at least one $b$, and similarly for the word $\tilde{b}_{i\ell+1}\cdots \tilde{b}_{i\ell+\ell}$. For each $i \in \{0,\ldots, q-1\}$, let $t_i,\tilde{t}_i \in \{1,\ldots, \ell\}$ be the smallest integers such that
	\begin{equation}\label{eqn before label for I_d}
		b_{i\ell + t_i} \ = \ b \ = \ \tilde{b}_{i\ell + \tilde{t}_i}.
	\end{equation}
	Let $d \in \{-\ell + 1,\ldots, \ell - 1\}$ maximize the cardinality of
	\begin{equation}\label{label for I_d}
		I_d \ := \ \bigl\{ i \in \{0,\ldots, q-1\} : \tilde{t}_i-t_i = d \bigr\}.
	\end{equation}
	Then $|I_d| \geq \frac{q}{2\ell-1}$. Without loss of generality, we may assume $d \geq 0$.
	
	Let $Y_1,Y_3, \tilde{Y}_1,\tilde{Y}_3$ be the (possibly empty) words such that
	\begin{equation}
		w \ = \ Y_1\sigma^{k^2+1}(x) Y_3
	\end{equation}
	and
	\begin{equation}
		\tilde{w} \ = \ \tilde{Y}_1\sigma^{k^2+1}(\tilde{x}) \tilde{Y}_3.
	\end{equation}
	Then $\len(Y_1) + \len(Y_3) \leq 2\ell^{k^2+1}$ and $\len(\tilde{Y}_1) + \len(\tilde{Y}_3) \leq 2\ell^{k^2+1}$, so
	\begin{equation} \label{ignoring Y_1 and Y_3}
		\editfull(Y_1,\tilde{Y}_1) + \editfull(Y_3,\tilde{Y}_3) \ \leq \ 2 \ell^{k^2+1}.
	\end{equation}
	
	Let $B_i := \sigma^{k^2}(b_i)$ and $\tilde{B}_i := \sigma^{k^2}(\tilde{b}_i)$. Define $Y_2 := B_{q\ell - d+1}\cdots B_{q\ell}$ and $\tilde{Y}_2 := \tilde{B}_1\cdots \tilde{B}_{d}$, which are both empty if $d = 0$. Then
	\begin{align}
		w \ & = \ Y_1B_1\ldots B_{q\ell-d} Y_2 Y_3, \nonumber \\
		\tilde{w} \ & = \ \tilde{Y}_1\tilde{Y}_2 \tilde{B}_{d+1}\ldots \tilde{B}_{q\ell} \tilde{Y}_3. \label{decomposition of w and tilde w, uniform case}
	\end{align}
	See Figure~\ref{fig: UniformDiagram}.
	\begin{figure}
		\includegraphics[width=\textwidth]{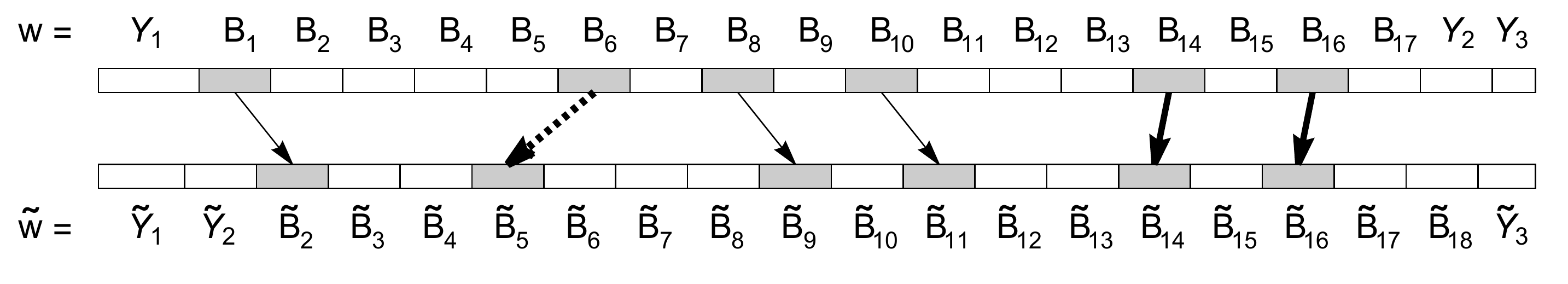}
		\caption{We illustrate \eqref{decomposition of w and tilde w, uniform case} for $q = 6$ and $\ell = 3$. Each triple $B_1B_2B_3$, $B_4B_5B_6$, and so on, contains a copy of the word $\sigma^{k^2}(b)$, indicated in gray. (Other blocks may also be this word.) The most popular shift is $d = 1$, indicated by thin solid arrows. In this case, $I_1 = \{0,2,3\}$.}
		\label{fig: UniformDiagram}
	\end{figure} 
	Moreover,
	\begin{equation}\label{deleting Y_2}
		\editfull(\varnothing,\tilde{Y}_2) + \editfull(Y_2,\varnothing) \ \leq \ \ell^{k^2+1}.
	\end{equation}
	By \eqref{ignoring Y_1 and Y_3}, \eqref{decomposition of w and tilde w, uniform case}, and \eqref{deleting Y_2}, 
	\begin{equation}
		\editfull(w,\tilde{w}) \ \leq \ 3\ell^{k^2+1} + \sum_{j=1}^{q\ell-d} \editfull(B_j,\tilde{B}_{d+j}).
	\end{equation}
	
	Here is the key step: For each $j \in \{1,\ldots, q\ell - d\}$, we have
	\begin{equation}
		\editfull(B_j,\tilde{B}_{d+j}) \ \begin{cases} = 0 & \text{ if } j = i \ell + t_i \text{ for some } i \in I_d, \\
			\leq \ell^{k^2} r_k & \text{ otherwise.} \end{cases}  
	\end{equation}
	There are $q\ell - d - |I_d| \leq q(\ell - \tfrac{1}{2\ell})$ indices $j$ of the second form. Thus, by \eqref{inequality for q},
	\begin{equation}
		\sum_{j=1}^{q\ell-d} \editfull(B_j,\tilde{B}_{d+j}) \ \leq \ \Bigl(1 - \frac{1}{2\ell^2}\Bigr) n r_k
	\end{equation}
	whence
	\begin{equation}
		\frac{\editfull(w,\tilde{w})}{n} \ \leq \ \frac{1}{\ell^k} + \Bigl(1 - \frac{1}{2\ell^2}\Bigr) r_k,
	\end{equation}
	yielding \eqref{eqn in lem: recursive estimate on r_k}.
\end{proof}

\begin{lemma}\label{lem: uniform sub positive incidence matrix bound on rk}
	Let $\ell > 1$ and fix an $\ell$-uniform substitution $\sigma$ with incidence matrix having a nonzero column. Then, for all $k \geq 3$,
	\begin{equation}
		r_k(\sigma) \ \leq \  3 \lambda^k,
	\end{equation} where $\lambda := 1 - \frac{1}{2\ell^2}$.
\end{lemma}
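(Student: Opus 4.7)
My plan is to iterate the recurrence of Lemma~\ref{lem: recursive estimate on r_k} starting from the trivial bound $r_k \leq 1$, which holds for every $k \geq 2$ because two words of length $n$ are always at edit distance at most $n$ (delete all letters of one, then insert all letters of the other, using at most $2n$ operations).

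Specifically, I would unfold the recurrence $r_{j+1} \leq \ell^{-j} + \lambda r_j$ from $j=2$ up to $j=K-1$ to obtain, for each $K \geq 3$,
\begin{equation*}
r_K \ \leq \ \lambda^{K-2} r_2 \ + \ \sum_{j=2}^{K-1} \lambda^{K-1-j}\ell^{-j} \ \leq \ \lambda^{K-2} \ + \ \lambda^{K-1}\sum_{j=2}^{K-1}(\ell\lambda)^{-j}.
\end{equation*}
Since $\ell \geq 2$ gives $\ell\lambda = \ell - \tfrac{1}{2\ell} \geq 7/4 > 1$, the geometric series $\sum_{j\geq 2}(\ell\lambda)^{-j}$ converges, with the worst-case (largest) value attained at $\ell = 2$. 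A short calculation then factors $\lambda^K$ out of both terms and uses $\lambda \geq 7/8$ (so $\lambda^{-2} \leq (8/7)^2$) together with the closed form of the geometric sum to conclude the total is at most $3\lambda^K$; in fact a constant slightly below $2.2$ works for $\ell = 2$ and only improves as $\ell$ grows.

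The main obstacle—minor but worth flagging—is that the naive induction ``$r_k \leq 3\lambda^k \Rightarrow r_{k+1}\leq 3\lambda^{k+1}$'' does \emph{not} close: substituting the hypothesis into \eqref{eqn in lem: recursive estimate on r_k} yields $r_{k+1} \leq \ell^{-k} + 3\lambda^{k+1}$, and the additive $\ell^{-k}$ term cannot be absorbed. The telescoping remedy works precisely because $\ell\lambda > 1$, so the accumulated contributions $\sum_j \lambda^{K-1-j}\ell^{-j}$ form a geometric series summing to $O(\lambda^K)$ with a constant small enough to fit under $3\lambda^K$ alongside the leading term $\lambda^{K-2}$.
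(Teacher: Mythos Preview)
Your proposal is correct and essentially identical to the paper's own argument: both unfold the recurrence from $r_2 \leq 1$, rewrite the accumulated error as $\lambda^{K}$ times a tail of the geometric series $\sum_{j \geq 2}(\ell\lambda)^{-j}$, and then check that the resulting constant, maximized at $\ell = 2$, is at most $3$ (the paper computes it as $320/147$, matching your ``slightly below $2.2$''). Your remark on why the naive induction fails is a nice addition but does not alter the approach.
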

\begin{proof}
	By Lemma~\ref{lem: recursive estimate on r_k}, for each $k \geq 2$,
	\begin{equation}
		r_{k+1} \ \leq \ \frac{1}{\ell^{k}} + \lambda r_k.
	\end{equation}
	It follows that
	\begin{equation}\label{random intermediate equation}
		r_{k+1} \ \leq \ \sum_{i=0}^{k-2} \frac{\lambda^i}{\ell^{(k-i)}} + \lambda^{k-1}r_2.
	\end{equation}
	The identity
	\begin{equation}
		\sum_{i=0}^{k-2} \frac{\lambda^i}{\ell^{(k-i)}} \ = \ \lambda^k \sum_{j=2}^{k} (\ell\lambda)^{-j},
	\end{equation}
	together with \eqref{random intermediate equation} and $r_2 \leq 1$, implies
	\begin{equation}
		r_{k+1} \ \leq \ \lambda^{k+1} \Bigl( \sum_{j=2}^{\infty} (\ell\lambda)^{-j} + \frac{1}{\lambda}\Bigr)/\lambda.
	\end{equation}
	On the domain $[2,\infty)$, the function
	\begin{equation}
		\ell \ \mapsto \ \Bigl( \sum_{j=2}^{\infty} (\ell\lambda)^{-j} + \frac{1}{\lambda}\Bigr)/\lambda
	\end{equation}
	attains its maximum $320/147 \leq 3$ at $\ell = 2$.
\end{proof}

\begin{proof}[Proof of Theorem~\ref{main theorem} in the uniform case]
	Since $|\mathcal{A}| > 1$, the substitution $\sigma$ must be $\ell$-uniform for some $\ell > 1$.
	
	\medskip 
	
	\noindent{\bf Claim}: Let $p \in \N$ be such that $\sigma^p$ has positive incidence matrix. Then, 
	\begin{equation}
		\forall k \geq 4p, \quad r_k(\sigma) \ \leq \ 4 \lambda_p^k,
		\quad \text{where} \; \;  \lambda_p := \left(1-\frac{1}{2\ell^{2p}}\right)^{1/\sqrt{p}}.
	\end{equation}

	To verify the claim, we apply  Lemma~\ref{lem: uniform sub positive incidence matrix bound on rk} to the $\ell^{p}$-uniform substitution $\sigma^{p}$ and find that
	\begin{multline}\label{application of lem: uniform sub positive incidence matrix bound on rk}
		\max\left\{ \frac{\editfull(w,\tilde{w})}{\len(w)} : w,\tilde{w} \in \mathcal{W}_n(\sigma^{p}) \text{ for some } n \in [\ell^{pk^2},\ell^{p(k+1)^2}) \right\} \ \leq \ 3\lambda_*^k
	\end{multline}
	for all $k \geq 3$, where $\lambda_* := 1-\frac{1}{2\ell^{2p}}$.
	
	Let $w,\tilde{w} \in \mathcal{W}_n(\sigma)$ be such that $n \in [\ell^{k^2},\ell^{(k+1)^2})$. Let $m,\tilde{m} \in \N$ and $a_0,\tilde{a}_0 \in \mathcal{A}$ be such that $w \prec \sigma^m(a_0)$ and $\tilde{w} \prec \sigma^{\tilde{m}}(\tilde{a}_0)$.
	
	Let $q \in \N$ be the smallest integer such that $pq - m\geq p$. The matrix $M_\sigma^p$ is positive, hence $M_{\sigma}^{pq-m}$ is positive, so $a_0 \prec \sigma^{pq-m}(a_0)$. Thus we have $w \prec \sigma^m(\sigma^{pq-m}(a_0)) = \sigma^{pq}(a_0) $, so $w \in \mathcal{W}_n(\sigma^p)$, and similarly for $\tilde{w}$.
	
	Observe that $k = \lfloor \sqrt{\log_\ell(n)} \rfloor$. Let $k_1 := \lfloor \sqrt{\log_\ell(n)/p} \rfloor$. Then we have $n \in [\ell^{pk_1^2},\ell^{p(k_1+1)^2})$ and $k_1 \geq k/\sqrt{p} - 1$. Notice that $k \geq 4p$ implies that $k_1 \geq 3$. Therefore, provided that $k \geq 4p$, by \eqref{application of lem: uniform sub positive incidence matrix bound on rk},
	\begin{equation}
		\frac{\editfull(w,\tilde{w})}{\len(w)} \ \leq \ 3\lambda_*^{k_1} \ \leq \ \frac{3}{\lambda_*}  
		\left(\lambda_*^{1/\sqrt{p}}\right)^{k}.
	\end{equation}
	Note that $\lambda_p = \lambda_*^{1/\sqrt{p}}$ and $1/\lambda_* \leq 8/7$. We conclude that, for all $k \geq 4p$,
	\begin{equation}
		\max\left\{ \frac{\editfull(w,\tilde{w})}{\len(w)} : w,\tilde{w} \in \mathcal{W}_n(\sigma) \text{ for some } n \in [\ell^{k^2},\ell^{(k+1)^2}) \right\} \ \leq \ 4\lambda_p^k,
	\end{equation}
	which yields the claim.
\end{proof}

\section{Proof of Theorem~\ref{main theorem} in the non-uniform case}\label{sec: proof of main theorem in non-uniform case}

Let $\sigma$ be a primitive substitution. For integer $k$ and real $\lambda$ with $k,\lambda \geq 2$, define
\begin{equation}\label{definition of r_k, nonuniform case}
	r_k(\sigma,\lambda) := \max\!\left\{ \frac{\editfull(w,\tilde{w})}{\len(w)} : w,\tilde{w} \in \mathcal{W}_n(\sigma) \text{ for some } n \in [\lambda^{k^2},\lambda^{(k+1)^2}) \right\}.
\end{equation}
Since $\lambda \ge 2$, every word of length at least $\lambda^{(k+1)^2}$  can be written as a concatenation of words of length in $[\lambda^{k^2},\lambda^{(k+1)^2})$. Therefore, for all $w,\tilde{w} \in \mathcal{W}_n(\sigma)$ with $n \geq \lambda^{k^2}$, we have
\begin{equation}\label{edit distance and r_k inequality} 
	\editfull(w,\tilde{w}) \ \leq \ r_k(\sigma,\lambda)\len(w).
\end{equation}

\noindent\textbf{Notation.}   Given a non-uniform primitive substitution $\sigma$, let $p=p(\sigma) \in \N$ be minimal such that all entries of the matrix $M_{\sigma}^p$ are positive and write $\tau := \sigma^{p}$. Denote by $\lambda$ the Perron eigenvalue of the positive matrix $M_\tau = M_{\sigma}^p$.

\begin{lemma}\label{lem: non-uniform, word length controlled by PF eval}
	With notation as above, we have $\lambda \geq 2$ and there exist constants $c_1 > 0$ and $C_2 \geq 1$ such that
	\begin{equation}
		c_1\lambda^n \len(w) \ \leq \ \len(\tau^n(w)) \ \leq \ C_2\lambda^n\len(w)
	\end{equation}
	for all $w \in \mathcal{A}^+$ and $n \in \N$.
\end{lemma}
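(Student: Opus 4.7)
The plan is to prove the two assertions separately using Perron--Frobenius theory applied to the positive matrix $M_\tau$.

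For the bound $\lambda \geq 2$, I will argue as follows. By minimality of $p$, every entry of $M_\tau = M_\sigma^p$ is a positive integer, hence at least $1$. Therefore each row sum of $M_\tau$ is at least $|\mathcal{A}|$. The non-uniformity hypothesis forces $|\mathcal{A}| \geq 2$, so $|\tau(a)| \geq 2$ for every $a \in \mathcal{A}$, equivalently $M_\tau \mathbf{1} \geq 2\mathbf{1}$ componentwise. Iterating yields $M_\tau^n \mathbf{1} \geq 2^n \mathbf{1}$; since $\|M_\tau^n\|$ grows like $\lambda^n$ by Perron--Frobenius, we conclude $\lambda \geq 2$.

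For the two-sided length bounds, I will express $\len(\tau^n(w))$ through letter-count vectors. Let $\mathbf{v}(w) \in \N^{|\mathcal{A}|}$ record the number of occurrences of each letter in $w$, so that $\mathbf{1}^T \mathbf{v}(w) = \len(w)$. Using $M_{\tau^n} = M_\tau^n$ and $|\tau^n(a)| = (M_\tau^n \mathbf{1})_a$, one obtains
\[ \len(\tau^n(w)) \ = \ \sum_a (\mathbf{v}(w))_a \, |\tau^n(a)| \ = \ \mathbf{v}(w)^T M_\tau^n \mathbf{1}. \]
I will then invoke Perron--Frobenius on the positive matrix $M_\tau$: letting $u,v$ be the strictly positive left and right Perron eigenvectors with $u^T v = 1$, and $\lambda_2 < \lambda$ the subdominant eigenvalue modulus, one has $M_\tau^n = \lambda^n v u^T + E_n$ with $\|E_n\| = O(\lambda_2^n)$. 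Substituting yields
\[ \len(\tau^n(w)) \ = \ \lambda^n (u^T \mathbf{1})(v^T \mathbf{v}(w)) + O\bigl(\lambda_2^n \len(w)\bigr), \]
and the componentwise positivity of $v$ gives $v_{\min}\len(w) \leq v^T \mathbf{v}(w) \leq v_{\max}\len(w)$ with $v_{\min}, v_{\max} > 0$.

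The upper bound $\len(\tau^n(w)) \leq C_2 \lambda^n \len(w)$ is then immediate, since the error term is dominated by the main term. For the lower bound, the same expansion gives $\len(\tau^n(w)) \geq \tfrac{1}{2}(u^T \mathbf{1})\, v_{\min}\, \lambda^n \len(w)$ whenever $n$ exceeds some threshold $n_0$; for the finitely many small values $n \leq n_0$, the trivial bound $\len(\tau^n(w)) \geq \len(w)$ allows me to shrink $c_1$ as needed. I do not anticipate any real obstacle here: the lemma is essentially Perron--Frobenius packaged together with the standard identity between word length and the incidence matrix.
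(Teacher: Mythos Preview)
Your proposal is correct and follows essentially the same route as the paper: both derive $\lambda\ge 2$ from the fact that positivity of $M_\tau$ together with $|\mathcal{A}|\ge 2$ forces $\len(\tau^n(a))\ge 2^n$, and both obtain the two-sided length bounds from Perron--Frobenius applied to the positive matrix $M_\tau$. The only cosmetic difference is that the paper phrases the Perron--Frobenius input as ``$M_\tau^n/\lambda^n$ converges to a positive matrix, hence its entries lie in a fixed interval $[\tilde{c}_1,\tilde{C}_2]$ for all $n$,'' which yields the lower bound uniformly in $n$ without your separate treatment of small $n$; your spectral-decomposition version is equivalent but slightly more work.
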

\begin{proof}
	By the Perron--Frobenius theorem (see \cite{hornjohnson}), $M_{\tau}^n/\lambda^n$ converges to a positive matrix as $n \to \infty$. Therefore, there exist $\tilde{c}_1,\tilde{C}_2 > 0$ such that, for all $n$, we have $M_{\tau}^n(a,b) \in [\tilde{c}_1\lambda^n,\tilde{C}_2\lambda^n]$. 
	
	Set $c_1 = \tilde{c}_1|\mathcal{A}|$ and $C_2 = \max\{1,\tilde{C}_2|\mathcal{A}|\}$. Then, for each $w \in \mathcal{A}^+$ and $n \in \N$,
	\begin{equation}
		c_1\lambda^n\len(w) \ \leq \ \len(\tau^{n}(w)) \ \leq \ C_2\lambda^n\len(w).
	\end{equation}
	Moreover, because $\sigma$ is non-uniform, the alphabet $\mathcal{A}$ has at least two symbols, hence, for each $a \in \mathcal{A}$ and $n \in \N$, we observe by positivity of $M_\tau$ that
	\begin{equation}
		2^n \ \leq \ \len(\tau^n(a)) \ \leq \ C_2\lambda^n,
	\end{equation}
	hence $\lambda \geq 2$.
\end{proof}

For a finite set of integers $S =\{s_1,\ldots, s_{|S|}\}$ written in increasing order, we denote the \emph{smallest gap} of $S$ by
\begin{equation}
	\mingap(S) \ := \ \min\{s_{j+1}-s_{j} : 1 \leq j \leq |S| - 1\}.
\end{equation}
and the \emph{largest gap} of $S$ by
\begin{equation}
	\maxgap(S) \ := \ \max\{ s_{j+1} - s_j : 1 \leq j \leq |S|- 1 \}.
\end{equation}

\begin{lemma}\label{lem: recursive estimate on r_k, nonuniform case}
	In the notation preceding Lemma~\ref{lem: non-uniform, word length controlled by PF eval}, there exist constants $C_{10}$ and $k_0$ such that, for each $k \geq k_0$,
	\begin{equation}
		r_{k+1}(\tau,\lambda) \ \leq \ C_{10}/k^2 + \left(1-\frac{2}{k}\right)r_k(\tau,\lambda).
	\end{equation}
\end{lemma}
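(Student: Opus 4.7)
The plan is to adapt the argument from the uniform case (Lemma~\ref{lem: recursive estimate on r_k}) to the non-uniform setting, using anchors of the form $\tau^{k^2}(b)$ for a common letter $b\in\mathcal{A}$. Given $w,\tilde w\in\mathcal{W}_n(\tau)$ with $n\in[\lambda^{(k+1)^2},\lambda^{(k+2)^2})$, I decompose $w=Y_1\,\tau^{k^2+1}(y)\,Y_3$ by choosing $y$ of maximal length with $\tau^{k^2+1}(y)\prec w$ (and analogously $\tilde w=\tilde Y_1\,\tau^{k^2+1}(\tilde y)\,\tilde Y_3$). By Lemma~\ref{lem: non-uniform, word length controlled by PF eval}, $\len(Y_1)+\len(Y_3)\le 2C_2\lambda^{k^2+1}$, contributing an overhead of order $\lambda^{k^2+1}/n\le 1/\lambda^{2k}$, negligible compared to $1/k^2$.

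Writing $\tau(y)=b_1\cdots b_Q$, we have $\tau^{k^2+1}(y)=\tau^{k^2}(b_1)\cdots\tau^{k^2}(b_Q)$. Fix $b\in\mathcal{A}$. Since every entry of $M_\tau$ is positive, $b$ occurs in each $\tau(a)$, so $J:=\{i:b_i=b\}$ satisfies $|J|\ge q:=\len(y)$ and $\maxgap(J)\le 2\max_a\len(\tau(a))$; define $\tilde J$ analogously. Each $i\in J$ marks a sub-block $\tau^{k^2}(b)$ inside $w$; matching $i\in J$ with $\tilde i\in\tilde J$ costs zero edit distance, while the gap words flanking a matched pair are subwords of $\tau^{k^2+1}$ applied to short sequences, of length $O(\lambda^{k^2+1})$, and the inductive bound~\eqref{edit distance and r_k inequality} at parameter $k$ applies to them.

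The crux is to construct an order-preserving partial matching $J\to\tilde J$ whose matched pairs cover a fraction at least $2/k$ of $n$, while controlling the length mismatch between paired gap words (the principal new source of loss compared with the uniform case). In the uniform case, anchor positions are rigidly spaced and a single pigeonhole over $2\ell-1$ possible shifts yields a constant-fraction matching. In the non-uniform case, anchor positions drift: the physical position in $w$ of the $i$-th element of $J$ is $\len(Y_1)+\sum_{j<i}\len(\tau^{k^2}(b_j))$, and by Perron--Frobenius concentration this equals $i\cdot c\,\lambda^{k^2}$ up to fluctuations of order $\lambda^{k^2}\sqrt{i}$. I plan to partition the indices of $J$ into consecutive groups of size $\sim k$, apply pigeonhole on shifts within each group, and chain the per-group matchings into a global order-preserving matching. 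The group size $\sim k$ is chosen to balance the pigeonhole saving against the length-mismatch penalty, and should yield matched coverage of $2n/k-O(n/k^2)$.

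The main obstacle is precisely this matching construction: the combinatorial argument controlling both the count of matched anchors and the total length-mismatch of the resulting gap words is delicate, and likely requires quantitative Perron--Frobenius fluctuation estimates beyond those stated in Lemma~\ref{lem: non-uniform, word length controlled by PF eval}. Once the matching is established, bounding $\editfull(\text{gap}^i_w,\text{gap}^i_{\tilde w})\le r_k\min(\len(\text{gap}^i_w),\len(\text{gap}^i_{\tilde w}))+\tfrac12|\len(\text{gap}^i_w)-\len(\text{gap}^i_{\tilde w})|$ (from truncating the longer gap to the length of the shorter and applying~\eqref{edit distance and r_k inequality}) and summing yields the claimed recursive inequality, with the $C_{10}/k^2$ overhead absorbing both the boundary residues $Y_1,Y_3,\tilde Y_1,\tilde Y_3$ and the total length-mismatch penalty.
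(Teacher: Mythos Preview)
Your overall framework matches the paper's proof: decompose $w,\tilde w$ into blocks $\tau^{k^2}(b_i)$, isolate the anchors where $b_i=b$, match anchors for zero edit cost, and bound each pair of gap words by truncating the longer to the shorter's length and applying~\eqref{edit distance and r_k inequality}, with the length mismatches feeding the $C_{10}/k^2$ term. The divergence---and the gap---is in the matching construction. Your group-wise pigeonhole plan rests on the claim that the physical position of the $i$-th anchor is $ic\lambda^{k^2}+O(\lambda^{k^2}\sqrt i)$, but with only the bounds $c_1\lambda^{k^2}\le\len(\tau^{k^2}(b_j))\le C_2\lambda^{k^2}$ from Lemma~\ref{lem: non-uniform, word length controlled by PF eval}, the deviation of the $i$-th partial sum from any fixed linear function is $O(i\lambda^{k^2})$, not $O(\sqrt i\,\lambda^{k^2})$; the $\sqrt i$ bound would require a CLT-type estimate on letter frequencies that is neither stated nor needed. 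So the balancing you describe does not close as written, and the ``quantitative Perron--Frobenius fluctuation estimates beyond those stated'' that you anticipate are never invoked in the paper.

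The paper's matching avoids all of this. Let $I_2\subset\{1,\dots,n\}$ (resp.\ $\tilde I_2$) be the set of physical positions in $w$ (resp.\ $\tilde w$) of the anchor starts. Thin $I_2$ to $I_3$ by keeping every $\lceil(2+\delta)/c_1\rceil$-th element, so $\mingap(I_3)\ge 2\lambda^{k^2}+L$ (this guarantees the later gap words have length $\ge\lambda^{k^2}$). For each $i\in I_3$ set $\psi(i):=\min\{j\ge 0:i+j\in\tilde I_2\}$. Since $\maxgap(\tilde I_2)\le 2\ell C_2\lambda^{k^2}$ and $\min(\tilde I_2)\le(\lambda+\ell)C_2\lambda^{k^2}$, every $\psi(i)$ lies in $[0,C_5\lambda^{k^2}]$---this uses only the crude two-sided bound in Lemma~\ref{lem: non-uniform, word length controlled by PF eval}. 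Now a \emph{single} pigeonhole over $\lceil C_8k/\theta\rceil$ bins of width $\theta\lambda^{k^2}/k$ on the values of $\psi$ yields $\hat I_3\subset I_3$ of size $s_*\ge c_9\theta n/(k\lambda^{k^2})$ on which $\psi$ varies by at most $\theta\lambda^{k^2}/k$. The map $i\mapsto i+\psi(i)$ restricted to $\hat I_3$ is order-preserving; the anchors cover $s_*L\ge c_1c_9\theta n/k$ of the length; each gap-length mismatch is $|\psi(\iota_{j+1})-\psi(\iota_j)|\le\theta\lambda^{k^2}/k$, so the total mismatch is $s_*\cdot\theta\lambda^{k^2}/k=O(n/k^2)$. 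Setting $\theta=2/(c_1c_9)$ gives the factor $(1-2/k)$. The missing idea, in one line: do not track global drift of anchor positions; instead measure the local offset $\psi$ to the \emph{nearest} $\tilde w$-anchor, which is uniformly $O(\lambda^{k^2})$ for free, and pigeonhole on that.
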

\begin{remark}
	In the statement of this lemma and in its proof, all constants depend only on $\sigma$.
\end{remark}
\begin{proof}
	Let $c_1,C_2$ be as in Lemma~\ref{lem: non-uniform, word length controlled by PF eval}. 
	
	Fix $k \geq k_0$, with $k_0$ to be determined later. Let $n \in \N$ be such that
	\begin{equation} \label{inequality for n} \lambda^{(k+1)^2} \ \leq \ n  \ <  \ \lambda^{(k+2)^2},
	\end{equation}
	and fix words $w,\tilde{w} \in \mathcal{W}_n(\tau)$. Let $m \in \N$ and $a_0 \in \mathcal{A}$ be such that $w \prec \tau^{m}(a_0)$. We may assume that $m \geq k^2+1$. Indeed, since $w \prec \tau^{m}(a_0)$, for every $i \in \N$, it follows that $w \prec \tau^{m+i}(a_0) = \tau^{m}(\tau^i(a_0))$, since $\tau^i(a_0)$ contains the letter $a_0$ by positivity of $M_\tau$.
	
	Let $x$ be a maximal length subword of $\tau^{m-k^2-1}(a_0)$ such that $\tau^{k^2+1}(x) \prec w$. By maximality, there exists $c_3$ such that
	\begin{equation}
		\len(x) \ \geq \ c_3n/\lambda^{k^2}.
	\end{equation}
	
	Write $\tau(x) = b_1\cdots b_{q}$, where $b_i \in \mathcal{A}$.
	
	For each $i \in \{1,\ldots, q\}$, put $B_i := \tau^{k^2}(b_i)$. Then
	\begin{equation}
		\tau^{k^2+1}(x) \ = \ B_1\cdots B_q.
	\end{equation}
	For all $i \in \{1,\ldots, q\}$,
	\begin{equation}\label{bound on length of B_i}
		\len(B_i) \in [c_1\lambda^{k^2},C_2\lambda^{k^2}].
	\end{equation}
	
	Let $B_0$ and $B_{q+1}$ be the (possibly empty) words such that
	\begin{equation}\label{decomposition of w with B_i}
		w \ = \ B_0B_1\cdots B_qB_{q+1}.
	\end{equation}
	By the maximality of $x$,
	\begin{equation}\label{eqn: bound on B_i ends}
		\max\{\len(B_0),\len(B_{q+1})\} \ \leq \ C_2\lambda^{k^2+1}.
	\end{equation}
	
	Fix $b \in \mathcal{A}$, and suppose $a$ denotes the first symbol of $\tau^{k^2}(b)$. Let
	\begin{equation}
		I_1 \ := \ \{\nu \in \{1,\ldots, q\} : b_\nu = b \}.
	\end{equation}
	By the positivity of $M_\tau$, we see that $|I_1| \geq \len(x)$ and hence $|I_1| \geq c_3n/\lambda^{k^2}$. Moreover, writing $\ell := \max\{\tau(\xi) : \xi \in \mathcal{A}\}$, we have
	\begin{equation}\label{mingap for I_1}
		\mingap(I_1) \ \geq \ 1
	\end{equation}
	and
	\begin{equation}\label{maxgap for I_1}
		\maxgap(I_1) \ \leq \ 2\ell.
	\end{equation}
	
	For each $\nu \in I_1$, let us call the first $a$ in $B_\nu$ the ``initial $a$''; the index in $w$ of the initial $a$ of $B_\nu$ is
	\begin{equation}
		1 +\sum_{j=0}^{\nu-1} \len(B_j).
	\end{equation}
	Let
	\begin{equation}
		I_2 \ := \ \Bigl\{1 +\sum_{j=0}^{\nu-1} \len(B_j) : \nu \in I_1 \Bigr\}
	\end{equation}
	and note that $|I_2| = |I_1|$.
	By \eqref{bound on length of B_i}, we have by \eqref{mingap for I_1}
	\begin{equation}\label{mingap for I_2}
		\mingap(I_2) \ \geq \ c_1\lambda^{k^2}\mingap(I_1) \ \geq \ c_1\lambda^{k^2}
	\end{equation}
	and by \eqref{maxgap for I_1}
	\begin{equation}\label{maxgap for I_2} \maxgap(I_2) \ \leq \ C_2\lambda^{k^2} \maxgap(I_1) \ \leq \ 2\ell C_2\lambda^{k^2} .
	\end{equation}
	
	\noindent Analogously, there is a word $\tilde{x}$ such that $\tau^{k^2+1}(\tilde{x}) \prec \tilde{w}$ and $\len(\tilde{x}) \geq c_3n/\lambda^{k^2}$. Writing $\tau(\tilde{x}) = \tilde{b}_1\cdots \tilde{b}_{\tilde{q}}$, where $\tilde{b}_i \in \mathcal{A}$, and $\tilde{B}_i := \tau^{k^2}(\tilde{b}_i)$, we have
	\begin{equation}
		\tau^{k^2+1}(\tilde{x}) \ = \ \tilde{B}_1\cdots \tilde{B}_{\tilde{q}}
	\end{equation}
	and, for all $i \in \{1,\ldots, \tilde{q}\}$,
	\begin{equation}\label{bound on length of tilde B_i}
		\len(\tilde{B}_i) \in [c_1\lambda^{k^2},C_2\lambda^{k^2}].
	\end{equation}
	Analogously, there exist (possibly empty) words $\tilde{B}_0$ and $\tilde{B}_{\tilde{q}+1}$ such that
	\begin{equation}\label{concatenation of tilde w}
		\tilde{w} \ = \ \tilde{B}_0\tilde{B}_1\cdots \tilde{B}_{\tilde{q}}\tilde{B}_{\tilde{q}+1}
	\end{equation}
	and
	\begin{equation} \label{eqn: bound on tilde B_i ends}
		\max\{\len(\tilde{B}_0),\len(\tilde{B}_{\tilde{q}+1})\} \ \leq \ C_2\lambda^{k^2+1}.
	\end{equation}
	
	Moreover, defining
	\begin{equation}
		\tilde{I}_1 \ := \ \{\nu \in \{1,\ldots, \tilde{q}\} : \tilde{b}_\nu = b \},
	\end{equation}
	we again have $|I_1| \geq c_3n/\lambda^{k^2}$, and defining
	\begin{equation}
		\tilde{I}_2 \ := \ \Bigl\{1 +\sum_{j=0}^{\nu-1} \len(\tilde{B}_j) : \nu \in \tilde{I}_1 \Bigr\},
	\end{equation}
	we likewise have $|\tilde{I}_2| = |\tilde{I}_1|$,
	\begin{equation}
		\mingap(\tilde{I}_2) \ \geq \ c_1\lambda^{k^2},
	\end{equation}
	and
	\begin{equation}\label{spacing of tilde I_2 elements}
		\maxgap(\tilde{I}_2) \ \leq \ 2\ell C_2\lambda^{k^2}.
	\end{equation}
	Since $\min(\tilde{I}_1) \leq \ell$, we also observe that
	\begin{equation}
		\min(\tilde{I}_2) \ \leq \ 1 + \sum_{j=0}^{\ell-1} \len(\tilde{B}_j),
	\end{equation}
	hence, by \eqref{bound on length of tilde B_i} and \eqref{eqn: bound on tilde B_i ends} and the fact that $C_2\lambda^{k^2} \geq 1$, 
	\begin{equation}\label{min of tilde I_2}
		\min(\tilde{I}_2) \ \leq \ (\lambda+\ell) C_2\lambda^{k^2}.
	\end{equation}
	\begin{figure}
		\includegraphics[width=\textwidth]{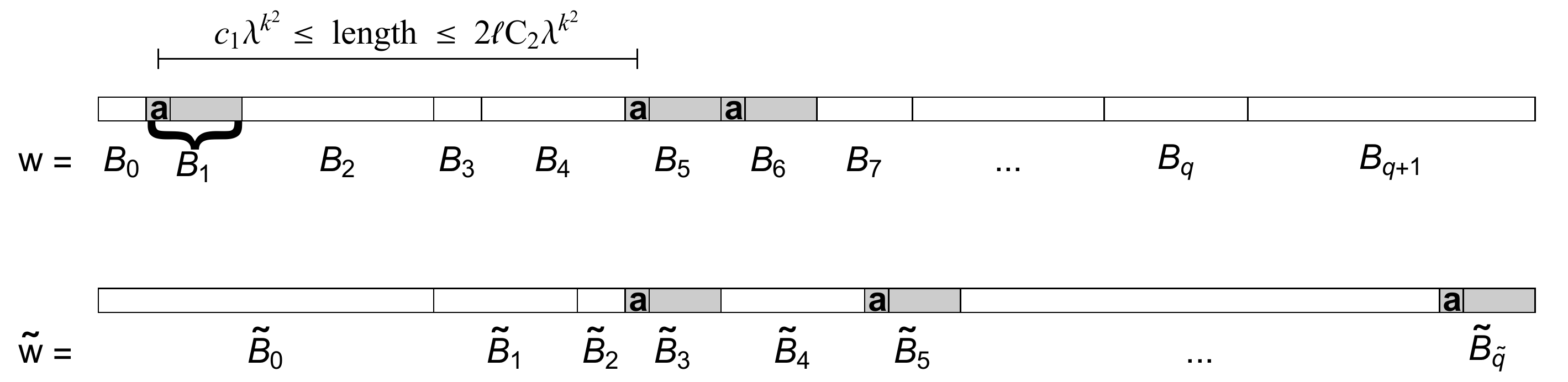}
		\captionsetup{singlelinecheck=off}
		\caption{The word $w$ of length $n$ is composed of words $B_i$ for $0\leq i \leq q+1$; see \eqref{bound on length of B_i}, \eqref{decomposition of w with B_i}, and \eqref{eqn: bound on B_i ends}. The words $B_0$ and $B_{q+1}$ can be empty. We fix $b \in \mathcal{A}$ and consider the set $I_1$ of indices $i$ for which $B_i = \tau^{k^2}(b)$. The subwords $B_i$ with $i \in I_1$ appear in gray and start with the letter $a$. The set $I_2$ consists of the indices in $w$ of these ``initial $a$'s.'' (The letter $a$ also appears in other locations in $w$.) Similar statements hold for $\tilde{w}$.  }
		\label{fig: Picture3}
	\end{figure}
	The situation so far is summarized by Figure~\ref{fig: Picture3}.
	
	Before we continue, let us describe the general strategy of the proof of this lemma. We express both $w$ and $\tilde{w}$ as concatenations of the same large number of copies of the word $\tau^{k^2}(b)$, separated by words of length at least $\lambda^{k^2}$. Then $\editfull(w,\tilde{w})$ is at most the sum of the edit distances between corresponding words in these decompositions. On the one hand, each copy of $\tau^{k^2}(b)$ in the decomposition for $w$ is matched up to a corresponding copy in the decomposition for $\tilde{w}$, and such word pairs contribute zero edit distance. On the other hand, we estimate the edit distances contributed by the remaining word pairs using (essentially only) $r_k(\tau,\lambda)$.
	
	The sets $I_2$ and $\tilde{I}_2$ are not suitable to help define the nice decompositions of $w$ and $\tilde{w}$ described in the previous paragraph, so we modify them now.
	
	Without loss of generality, we may assume that $\max(I_2) \leq \max(\tilde{I}_2)$.
	
	Write $L := \len(\tau^{k^2}(b))$ and $\delta:=  L/\lambda^{k^2}$, so $\delta \in [c_1,C_2]$ by   \eqref{bound on length of B_i}.  Define $I_3$ by taking every $\lceil (2+\delta)/c_1 \rceil$-th element of $I_2$. Then there exists $c_4$ such that $|I_3| \geq c_4|I_2|$. Moreover, by \eqref{mingap for I_2}
	\begin{equation} \label{mingap for I_3}
		\mingap(I_3) \ \geq \ \lceil (2+\delta)/c_1 \rceil\mingap(I_2) \ \geq \ 2\lambda^{k^2}+L.
	\end{equation}
	
	Let $C_5 := \max\{2\ell C_2, (\lambda+\ell)C_2\}$.
	
	\noindent{\bf Claim:} The map $\psi:I_3 \to \Z$ given by
	\begin{equation} 
		\psi(i) \ := \ \min\{ j \ge 0: \, i + j \in \tilde{I}_2\}
	\end{equation}
	satisfies
	\begin{equation} \label{bound on psi}
		\psi(i) \ \leq \ C_5\lambda^{k^2}
	\end{equation} for all $i \in I_3$.
	
	Note $\psi$ is well defined because $\max(I_3) \leq \max(I_2) \leq \max(\tilde{I}_2)$. 
	
	Observe that for all $i \in I_3$ with $i < \min(\tilde{I}_2)$, we have
	\begin{equation} \psi(i) \ = \ \min(\tilde{I}_2) - i \ \leq \ \min(\tilde{I}_2),
	\end{equation}
	while for all $i \in I_3$ with $i \geq \min(\tilde{I}_2)$, we have
	\begin{equation} \psi(i) \ \leq \ \maxgap(\tilde{I}_2)
	\end{equation} since $i \leq \max(\tilde{I}_2)$. The claim thus follows by \eqref{spacing of tilde I_2 elements} and \eqref{min of tilde I_2}. 
	
	Define $\tilde{I}_3 := \{i + \psi(i) : i \in I_3\}$. 
	
	We now present nice decompositions of $w$ and $\tilde{w}$. Write $s := |I_3| = |\tilde{I}_3|$. We decompose $w$ and $\tilde{w}$ as
	\begin{align}\label{initial decomposition of w and tilde w}
		w \ & = \ w_0x_1w_1 \cdots x_sw_s \nonumber \\
		\tilde{w} \ & = \ \tilde{w}_0\tilde{x}_1\tilde{w}_1 \cdots \tilde{x}_s\tilde{w}_s,
	\end{align}
	where
	\begin{enumerate}
		\item each of the words $x_1, \ldots, x_s, \tilde{x}_1, \ldots, \tilde{x}_s$ is the word $\tau^{k^2}(b)$,
		\item for each $j$, the index in $w$ of the initial $a$ of $x_j$ is an element of $I_3$,
		\item for each $j$, the index in $\tilde{w}$ of the initial $a$ of $\tilde{x}_j$ is an element of $\tilde{I}_3$,
		\item $\psi(i) \in [0,C_5\lambda^{k^2}]$ for all $i \in I_3$,
		\item and each of the words $w_1,\ldots, w_{s-1}$ has length at least $\lambda^{k^2}$ by \eqref{mingap for I_3}.
	\end{enumerate}

	Since $|I_3| \geq c_4|I_2|$ and $|I_2| = |I_1| \geq c_3n/\lambda^{k^2}$, there exists $c_7$ such that $|I_3| \geq c_7 n/\lambda^{k^2}$.
	
	Let $\theta>0$ be a constant to be determined later.
	
	Let $C_8 = C_5 + 1$. Observe that
	\begin{equation}\label{bins}
		[0,C_5] \ \subset \ \bigcup_{j=1}^{\lceil C_8k/\theta\rceil} \left[\frac{(j-1)\theta}{k},\frac{j\theta}{k}\right).
	\end{equation}
	For each $j$, define
	\begin{equation}\label{bins for I_3}
		I_3(j) \ := \ \left\{ i \in I_3 : \frac{\psi(i)}{\lambda^{k^2}} \in \left[\frac{(j-1)\theta}{k},\frac{j\theta}{k}\right) \right\}.
	\end{equation}
	Fix $j_*$ such that $|I_3(j_*)|$ is maximized. Then, by the pigeonhole principle,
	\begin{equation}
		|I_3(j_*)| \ \geq \ \frac{|I_3|}{\lceil C_8k/\theta\rceil},
	\end{equation}
	hence there exists $c_9$ such that
	\begin{equation}
		|I_3(j_*)| \ \geq \ c_9\frac{\theta n}{k\lambda^{k^2}}.
	\end{equation}
	
	Fix a subset $\hat{I}_3$ of $I_3(j_*)$ with cardinality
	\begin{equation}\label{cardinality of s_*}
		s_* \ := \ \left\lceil c_9\frac{\theta n}{k\lambda^{k^2}} \right\rceil.
	\end{equation}
	Enumerate $\hat{I}_3$ in increasing order as $\iota_1,\ldots, \iota_{s_*}$, and let $i_j$ be such that the index of the initial $a$ of $x_{i_j}$ is precisely $\iota_{j}$.
	
	Now let us prepare to estimate $\editfull(w,\tilde{w})$.
	
	First, we modify the decomposition \eqref{initial decomposition of w and tilde w} by combining some words together. Define the words $W_0,W_1,\ldots, W_{s_*}$ as follows:
	\begin{equation} 
		w \ = \ \underbrace{w_0x_1w_1 \cdots x_{i_1-1}w_{i_1-1}}_{W_0} x_{i_1} \underbrace{w_{i_1}\cdots w_{i_2-1}}_{W_1} x_{i_2} \cdots x_{i_{s_*}} \underbrace{w_{i_{s_*}}\cdots w_s}_{W_{s_*}}.
	\end{equation}
	Do the same for $\tilde{w}$, so that (see Figure~\ref{fig: UpdatedPicture10})
	\begin{align}\label{final decomposition of w and tilde w}
		w \ & = \ W_0x_{i_1}W_1x_{i_2}W_2\cdots x_{i_{s_*}}W_{s_*}, \nonumber \\
		\tilde{w} \ & = \ \tilde{W}_0\tilde{x}_{i_1}\tilde{W}_1\tilde{x}_{i_2}\tilde{W}_2\cdots \tilde{x}_{i_{s_*}}\tilde{W}_{s_*}.
	\end{align}
	\begin{figure}
		\includegraphics[width=\textwidth]{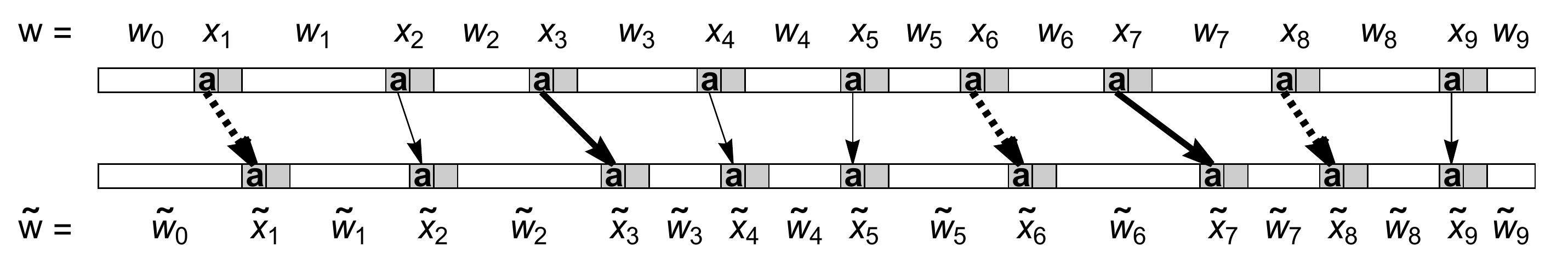}
		\caption{We illustrate \eqref{initial decomposition of w and tilde w} for $s = 9$.  The set $I_3$ consists of the locations in $w$ of the initial $a$'s in the words $x_1,\ldots, x_9$, and similarly for $\tilde{I}_3$. For each $i \in I_3$,  the horizontal length of the corresponding arrow is   $\psi(i)$. All ratios $\psi(i)/\lambda^{k^2}$ belong to $[0,C_5]$, which we divide into bins $I_3(j)$ (see \eqref{bins} and \eqref{bins for I_3}). In this figure, the ratios fall into three bins, and the corresponding arrows are marked accordingly. The most crowded bin $I_3(j_*)$ is represented by the thin solid arrows.}
		\label{fig: Picture9}
	\end{figure} 
	\begin{figure}
		\includegraphics[width=\textwidth]{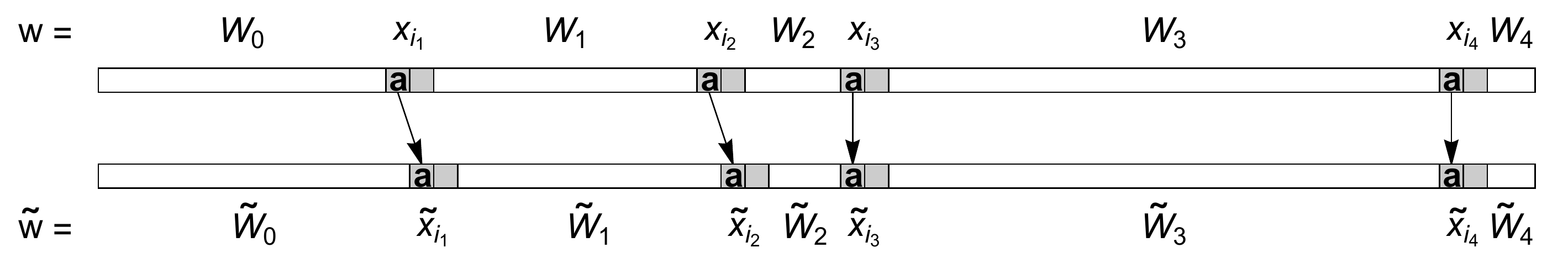}
		\caption{To obtain the decomposition \eqref{final decomposition of w and tilde w}, we ignore all $x_i$ except those with ratio $\psi(i)/\lambda^{k^2} \in I_3(j_*)$ and merge words from the previous decomposition \eqref{initial decomposition of w and tilde w} as appropriate. For example, $W_0$ in this figure is $w_0x_1w_1$ in Figure~\ref{fig: Picture9}. (To simplify this caption, we assume $|\hat{I}_3| = |I_3(j_*)| = s_* = 4$.)}
		\label{fig: UpdatedPicture10}
	\end{figure}

	Now, for each $j \in \{0,\ldots, s_*\}$, define $y_j,\tilde{y}_j,z_j,\tilde{z}_j$ to be the words such that the conditions
	\begin{align}
		W_{j} \ & = \ y_jz_j \nonumber \\
		\tilde{W}_{j} \ & = \ \tilde{y}_j\tilde{z}_j \label{decomposition of W_j} 
	\end{align}
	and
	\begin{equation}
		\len(y_j) \ = \ \len(\tilde{y}_j) \ = \ \min\{\len(W_j),\len(\tilde{W}_j)\} 
	\end{equation}
	hold. At least one of $z_j$ and $\tilde{z}_j$ will be the empty word. We observe that, for $j \neq 0,s_*$, 
	\begin{equation}
		\max\{\len(z_j),\len(\tilde{z}_j)\} \ = \ \left| \len(W_j) - \len(\tilde{W}_j) \right| \ \leq \ \frac{\theta \lambda^{k^2}}{k}.
	\end{equation}
	Since the edit distance between two words is trivially bounded by (half) the sum of their lengths, we have for such $j$
	\begin{equation}\label{edit distance for z_j}
		\editfull(z_j,\tilde{z}_j) \ \leq \ \frac{\theta\lambda^{k^2}}{k}.
	\end{equation}
	
	Next, observe that since the words $w_1,\ldots, w_{s-1}$ have length at least $\lambda^{k^2}$, it follows that for all $j \in \{1,\ldots, s_*-1\}$,
	\begin{equation}
		\len(W_j) \ \geq \ \lambda^{k^2}.
	\end{equation}
	
	Set $\theta = 2/c_1c_9$. Then there exists $k_1$ such that $k \geq k_1$ implies 
	\begin{equation}
		\frac{\theta}{k} \ \leq \ 1.
	\end{equation}
	Fix $j \in \{1,\ldots, s_*-1\}$ and assume $k \geq k_1$. First,
	\begin{equation}
		\len(\tilde{W}_j) \ = \ \bigl( \iota_{j+1} + \psi(\iota_{j+1}) \bigr) - \bigl( \iota_{j} + \psi(\iota_{j}) + L\bigr).
	\end{equation}
	Second, by \eqref{mingap for I_3},
	\begin{equation}
		\iota_{j+1} - \iota_{j} \ \geq \ 2\lambda^{k^2}+L.
	\end{equation}
	Third, since $\iota_{j},\iota_{j+1} \in I_3(j_*)$,
	\begin{equation}
		\left\vert \psi(\iota_{j+1}) - \psi(\iota_{j}) \right\vert \ \leq \ \frac{\theta\lambda^{k^2}}{k} \ \leq \ \lambda^{k^2}.
	\end{equation}
	Therefore, $\len(\tilde{W}_j) \geq \lambda^{k^2}$.
	
	Therefore, provided $k \geq k_1$, for $j\in\{1,\ldots,s_*-1\}$,
	\begin{equation}
		\len(y_j) \ \geq \ \lambda^{k^2}, 
	\end{equation}
	so, by \eqref{edit distance and r_k inequality},
	\begin{equation}\label{edit distance for y_j}
		\editfull(y_j,\tilde{y}_j) \ \leq \ r_k(\tau,\lambda)\len(y_j).
	\end{equation}
	
	Next, we observe that
	\begin{equation}\label{identity for length of W_0}
		\len(\tilde{W}_0) - \len(W_0) \ = \ \psi(\iota_1) \ \leq \ C_5\lambda^{k^2}.
	\end{equation}
	If $\len(W_0) \geq \lambda^{k^2}$, then $\len(y_0) \geq \lambda^{k^2}$ and
	\begin{equation} \max\{\len(z_0),\len(\tilde{z}_0)\} \ = \ \psi(\iota_1) \ \leq \ C_5\lambda^{k^2},
	\end{equation}
	hence
	\begin{equation}
		\editfull(W_0,\tilde{W}_0) \ \leq \ \editfull(y_0,\tilde{y}_0) + \editfull(z_0,\tilde{z}_0) \ \leq \ r_k(\tau,\lambda) \len(y_0) + C_5\lambda^{k^2},
	\end{equation}
	and if $\len(W_0) < \lambda^{k^2}$, then by \eqref{identity for length of W_0}
	\begin{equation}
		\max\{\len(W_0),\len(\tilde{W}_0)\} \ = \ \len(\tilde{W}_0) \ < \ \lambda^{k^2} + C_5\lambda^{k^2},
	\end{equation}
	so that
	\begin{equation}
		\editfull(W_0,\tilde{W}_0) \ < \ \lambda^{k^2} + C_5\lambda^{k^2}.
	\end{equation}
	Thus, unconditionally, and recalling that $C_8 = C_5 + 1$, we have
	\begin{equation}\label{edit distance for left end}
		\editfull(W_0,\tilde{W}_0) \ \leq \ r_k(\tau,\lambda) \len(y_0) + C_8\lambda^{k^2}.
	\end{equation}
	Analogously, starting from the fact that $\len(W_{s_*})-\len(\tilde{W}_{s_*}) = \psi(\iota_{s_*})$, we obtain
	\begin{equation}\label{edit distance for right end}
		\editfull(W_{s_*},\tilde{W}_{s_*}) \ \leq \ r_k(\tau,\lambda) \len(y_{s_*}) + C_8\lambda^{k^2}.
	\end{equation}
	
	\medskip
	
	Now we are ready to estimate $\editfull(w,\tilde{w})$. By construction, all $x_i$ and $\tilde{x}_i$ are the word $\tau^{k^2}(b)$, hence, for each $j$,
	\begin{equation}\label{edit distance for x_j} 
		\editfull(x_{i_j},\tilde{x}_{i_j}) \ = \ 0.	
	\end{equation}
	By the decompositions \eqref{final decomposition of w and tilde w} and \eqref{decomposition of W_j} and the relations \eqref{edit distance for y_j}, \eqref{edit distance for z_j}, \eqref{edit distance for x_j}, \eqref{edit distance for left end}, and \eqref{edit distance for right end}, we obtain (provided $k \geq k_1$)
	\begin{multline}
		\editfull(w,\tilde{w}) \ \leq \ \editfull(W_0,\tilde{W}_0) +
		\editfull(x_{i_1},\tilde{x}_{i_1}) + \editfull(y_1,\tilde{y}_1) + \editfull(z_1,\tilde{z}_1) \\ + \editfull(x_{i_2},\tilde{x}_{i_2}) + \editfull(y_2,\tilde{y}_2) + \editfull(z_2,\tilde{z}_2) \\ + \cdots + \\
		+\editfull(x_{i_{s_*-1}},\tilde{x}_{i_{s_*-1}}) + \editfull(y_{s_*-1},\tilde{y}_{s_*-1}) + \editfull(z_{s_*-1},\tilde{z}_{s_*-1}) \\
		+\editfull(x_{i_{s_*}},\tilde{x}_{i_{s_*}}) +  \editfull(W_{s_*},\tilde{W}_{s_*}) \\
		\leq \ 2C_8\lambda^{k^2} + r_k(\tau,\lambda) \sum_{j=0}^{s_*} \len(y_{j}) + (s_*-1)\frac{\theta \lambda^{k^2}}{k}.
	\end{multline}
	Recall $L = \len(\tau^{k^2}(b))$. Note that 
	\begin{equation}
		\sum_{j=0}^{s_*} \len(y_j) \ = \  n - \Bigl( s_*L + \sum_{j=0}^{s_*} \len(z_j) \Bigr)
		\ \leq \ n - s_*L.
	\end{equation}
	Therefore, we have
	\begin{equation}
		\frac{\editfull(w,\tilde{w})}{n} \ \leq \ \frac{2C_8\lambda^{k^2}}{n} +  \frac{(s_*-1)\theta\lambda^{k^2}}{nk} + r_k(\tau,\lambda) \left( 1 - \frac{s_*L}{n} \right).
	\end{equation}
	Let us estimate each term in the previous inequality.
	
	Since $L \geq c_1\lambda^{k^2}$ and $s_* \geq c_9\frac{\theta n}{k\lambda^{k^2}}$,
	\begin{equation}
		\frac{s_*L}{n} \ \geq \ \frac{c_1c_9\theta}{k}.
	\end{equation}
	Recall $\theta = 2/c_1c_9$. Then
	\begin{equation} \label{needed inequality 2}
		r_k(\tau,\lambda) \left( 1 - \frac{s_*L}{n} \right) \ \leq \ \left(1 - \frac{2}{k}\right)r_k(\tau,\lambda).
	\end{equation}
	Next, recall \eqref{inequality for n} and \eqref{cardinality of s_*}. Observe that there exists $k_2$ and $C_{10}$ such that, for all $k \geq k_2$,
	\begin{equation}\label{needed inequality 1}
		\frac{2C_8\lambda^{k^2}}{n} + 	\frac{(s_*-1)\theta \lambda^{k^2}}{nk} \ \leq \ \frac{C_{10}}{k^2}.
	\end{equation}
	Set $k_0= \max\{k_1,k_2\}$. Thus, by \eqref{needed inequality 2} and \eqref{needed inequality 1}, for all $k \geq k_0$,
	\begin{equation}
		\frac{\editfull(w,\tilde{w})}{n} \ \leq \ \frac{C_{10}}{k^2} + \left( 1 - \frac{2}{k}\right) r_k(\tau,\lambda).
	\end{equation}
	None of the constants depend on the words $w$ and $\tilde{w}$, so the lemma follows.
\end{proof}

\begin{lemma}\label{lem: bound on r_k, non-uniform case}
	In the notation preceding Lemma~\ref{lem: non-uniform, word length controlled by PF eval}, there exist constants $C_{13}$ and $k_0$ such that for each $k \geq k_0$,
	\begin{equation}
		r_{k}(\tau,\lambda) \ \leq \ C_{13}/k.
	\end{equation}
\end{lemma}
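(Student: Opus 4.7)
The plan is to prove the lemma by induction on $k$, using the recursive estimate from Lemma~\ref{lem: recursive estimate on r_k, nonuniform case} as the main engine. Since the edit distance between any two words of length $n$ is at most $n$ (delete $w$ and insert $\tilde{w}$, $2n$ operations in all), we have the trivial bound $r_k(\tau,\lambda) \le 1$ for every $k$. The idea is to combine this trivial base case with the recursion
\begin{equation*}
r_{k+1}(\tau,\lambda) \ \le \ \frac{C_{10}}{k^2} + \Bigl(1-\frac{2}{k}\Bigr) r_k(\tau,\lambda)
\end{equation*}
from Lemma~\ref{lem: recursive estimate on r_k, nonuniform case} to propagate a bound of the form $C_{13}/k$.

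For the inductive step, assume that $r_k(\tau,\lambda) \le C_{13}/k$ and plug this into the recursion to obtain
\begin{equation*}
r_{k+1}(\tau,\lambda) \ \le \ \frac{C_{10}}{k^2} + \frac{C_{13}}{k} - \frac{2C_{13}}{k^2} \ = \ \frac{C_{13}}{k} + \frac{C_{10}-2C_{13}}{k^2}.
\end{equation*}
Provided we choose $C_{13} \ge C_{10}$, the correction is at most $-C_{13}/k^2$, so
\begin{equation*}
r_{k+1}(\tau,\lambda) \ \le \ \frac{C_{13}(k-1)}{k^2}.
\end{equation*}
The elementary inequality $(k-1)(k+1) < k^2$ then upgrades this to $r_{k+1}(\tau,\lambda) \le C_{13}/(k+1)$, closing the induction.

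It remains to seed the induction. Taking $C_{13} := \max\{C_{10},\, k_0\}$ guarantees both $C_{13} \ge C_{10}$ (required above) and $C_{13}/k_0 \ge 1 \ge r_{k_0}(\tau,\lambda)$, so the base case $k=k_0$ holds. The only slight subtlety here is verifying that the $C_{10}$--$C_{13}$ comparison is the binding constraint in the inductive step; this is automatic because the recursion shaves off a $\Theta(1/k)$ fraction of $r_k$, which is exactly the rate at which the ansatz $C_{13}/k$ itself decays. Beyond this, the argument is routine, and no step presents any genuine obstacle.
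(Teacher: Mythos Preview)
Your proof is correct. The algebra checks out: with $C_{13}\ge C_{10}$ the recursion gives $r_{k+1}\le C_{13}(k-1)/k^2$, and $(k-1)(k+1)<k^2$ closes the induction; the base case is handled by the trivial bound $r_{k_0}\le 1$ and the choice $C_{13}\ge k_0$. One minor point worth making explicit is that the substitution $r_k\le C_{13}/k$ into the term $(1-2/k)r_k$ preserves the inequality only because $1-2/k\ge 0$, which holds since $r_k$ is defined only for $k\ge 2$ (and $k_0\ge 2$).

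Your route differs from the paper's. The paper does not guess the ansatz $C_{13}/k$ and verify it; instead it multiplies through by the integrating factor $\prod_{j=k_0}^{k-1}\frac{j}{j-2}=\frac{(k-1)(k-2)}{(k_0-1)(k_0-2)}$, setting $z_k:=r_k\cdot\frac{(k-1)(k-2)}{(k_0-1)(k_0-2)}$ so that the recursion collapses to $z_{k+1}\le z_k+C_{12}$, whence $z_k=O(k)$ and $r_k=O(1/k)$ since $z_k\asymp r_k k^2$. Your induction is shorter and more elementary once one has the right guess, while the paper's integrating-factor method is more systematic: it would discover the $1/k$ decay rate without prior knowledge and adapts readily to other contraction factors $(1-c/k)$.
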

\begin{proof}
	By Lemma~\ref{lem: recursive estimate on r_k, nonuniform case}, there are constants $C_{10}$ and $k_0$ such that, for each $k \geq k_0$,
	\begin{equation}\label{eqn: consequence of lem: recursive estimate on r_k, nonuniform case}
		r_{k+1}(\tau,\lambda) \ \leq \ C_{10}/k^2 + \left(1-\frac{2}{k}\right)r_k(\tau,\lambda).
	\end{equation}
	
	For each $k \geq k_0$, define
	\begin{equation}
		z_k \ := \ r_k(\tau,\lambda) \prod_{j=k_0}^{k-1} \frac{j}{j-2} \ = \ r_k(\tau,\lambda) \frac{(k-1)(k-2)}{(k_0-1)(k_0-2)}.
	\end{equation}
	Then $z_{k_0} = r_{k_0}(\tau,\lambda) \leq 1$ trivially, and there exists $\tilde{c}_{11}$ such that, for all $k \geq k_0$,
	\begin{equation} \label{eq:z_k-to-r_k}
		z_k \ > \ \tilde{c}_{11} r_k(\tau,\lambda)k^2.
	\end{equation}
	Multiply each side of \eqref{eqn: consequence of lem: recursive estimate on r_k, nonuniform case} by $\prod_{j=k_0}^{k} \frac{j}{j-2}$ to obtain the inequality
	\begin{equation}
		z_{k+1} \ \leq \ \frac{C_{10}}{k^2} \cdot \frac{k(k-1)}{(k_0-1)(k_0-2)} + z_k \ \leq \ C_{12} + z_k , 
	\end{equation}
	where we may assume that $C_{12} \ge 1$. Therefore,
	\begin{equation}
		z_{k} \ \leq \ (k-k_0)C_{12} + z_{k_0} \ \leq \ (k-k_0)C_{12} + 1 \ \leq \ C_{12}k.
	\end{equation}
	Combining this with \eqref{eq:z_k-to-r_k}, we conclude that there exists $C_{13}$ such that
	\begin{equation}
		r_{k} \ \leq \ C_{13}/k. 
	\end{equation}
\end{proof}

\begin{lemma} \label{lem: words are words} In the notation preceding Lemma~\ref{lem: non-uniform, word length controlled by PF eval}, $r_k(\sigma,\lambda) = r_k(\tau,\lambda)$.
\end{lemma}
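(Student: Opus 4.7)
The plan is to reduce the identity $r_k(\sigma,\lambda) = r_k(\tau,\lambda)$ to the stronger statement $\mathcal{W}_n(\sigma) = \mathcal{W}_n(\tau)$ for every $n \in \N$. Since the definition \eqref{definition of r_k, nonuniform case} of $r_k$ depends on the substitution only through its collection of length-$n$ subwords, this language-level equality implies the lemma immediately.

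The inclusion $\mathcal{W}_n(\tau) \subseteq \mathcal{W}_n(\sigma)$ is trivial, because $\tau^k(a) = \sigma^{pk}(a)$, so any subword of $\tau^k(a)$ is automatically a subword of $\sigma^{pk}(a)$. For the reverse inclusion, I start with an arbitrary $w \prec \sigma^m(a)$ and seek $b \in \mathcal{A}$ and $k \in \N$ with $w \prec \sigma^{pk}(b) = \tau^k(b)$. Setting $k := \lceil (m+p)/p \rceil$, so that $pk - m \geq p$, I plan to use positivity of $M_\sigma^p$ (the defining property of $p$) to argue that the letter $a$ occurs in $\sigma^{pk-m}(b)$ for every choice of $b$. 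Once this is known, decomposing $\sigma^{pk-m}(b) = c_1 c_2 \cdots c_\ell$ with some $c_j = a$ gives
\[
\sigma^{pk}(b) \ = \ \sigma^m(c_1)\sigma^m(c_2)\cdots\sigma^m(c_\ell),
\]
so the block $\sigma^m(c_j) = \sigma^m(a)$ appears consecutively inside $\sigma^{pk}(b)$, and this block already contains $w$.

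The only point requiring care is the justification that $\sigma^{pk-m}(b)$ contains the letter $a$, i.e., that $M_\sigma^{q}$ is positive for every $q \geq p$, given only that $M_\sigma^p$ is positive. Writing $\sigma^q(b) = \sigma^p(\sigma^{q-p}(b))$ for $q \geq p$, one notes that $\sigma^{q-p}(b)$ is non-empty (since $\sigma : \mathcal{A} \to \mathcal{A}^+$), and $\sigma^p$ applied to any single letter already contains every letter of $\mathcal{A}$ by positivity of $M_\sigma^p$; hence $\sigma^q(b)$ contains every letter, so $M_\sigma^q$ is positive for all $q \geq p$. This elementary propagation of positivity is the main (and only nontrivial) obstacle; once it is in hand, the lemma follows by direct unwinding of the definitions as described above.
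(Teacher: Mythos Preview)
Your proposal is correct and follows essentially the same approach as the paper: both reduce the lemma to $\mathcal{W}_n(\sigma) = \mathcal{W}_n(\tau)$, handle the trivial inclusion $\mathcal{W}_n(\tau) \subseteq \mathcal{W}_n(\sigma)$ directly, and for the reverse inclusion take $w \prec \sigma^m(a)$, choose the smallest $k$ with $pk - m \ge p$, and use positivity of $M_\sigma^{pk-m}$ (inherited from $M_\sigma^p$) to find $a$ inside $\sigma^{pk-m}(a)$, whence $w \prec \sigma^{pk}(a) = \tau^k(a)$. Your explicit justification that $M_\sigma^q$ is positive for all $q \ge p$ is a detail the paper leaves implicit, but otherwise the arguments are the same.
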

\begin{proof}
	The result follows by showing that $\mathcal{W}_n(\sigma^p) = \mathcal{W}_n(\sigma)$ for all $n \in \N$.
	
	The inclusion $\mathcal{W}_n(\sigma^p) \subseteq \mathcal{W}_n(\sigma)$ is trivial.
	
	Fix $w \in \mathcal{W}_n(\sigma)$, and let $m \in \N$ and $a_0 \in \mathcal{A}$ be such that $w \prec \sigma^m(a_0)$. Let $q \in \N$ be the smallest integer such that $pq - m\geq p$. The matrix $M_{\sigma}^{p}$ is positive, hence $M_{\sigma}^{pq-m}$ is positive, so the word $\sigma^{pq-m}(a_0)$ contains the letter $a_0$. Thus $w \prec \sigma^{pq}(a_0) = \sigma^m(\sigma^{pq-m}(a_0))$. Therefore $w \in \mathcal{W}_n(\sigma^{p})$.
\end{proof}

Now we can prove Theorem~\ref{main theorem}.  
\begin{proof}[Proof of Theorem~\ref{main theorem} when $\sigma$ is non-uniform]
	By Lemmas~\ref{lem: bound on r_k, non-uniform case}~and~\ref{lem: words are words}, there exist constants $\lambda \geq 2$ and $C$, $k_0$ such that for all $k \geq k_0$,
	\begin{equation}
		r_k(\sigma,\lambda) \ \leq \ C/k.
	\end{equation}
	For clarity, note that $\lambda$ is the Perron eigenvalue of the matrix $M_{\sigma}^p$, where $p \in \N$ is minimal such that all entries in $M_\sigma^p$ are positive.
	It follows that
	\begin{equation}
		\mathrm{diam}_E(\mathcal{W}_n(\sigma)) \ = \ O\Bigl(\frac{n}{\sqrt{\log n}}\Bigr).
	\end{equation}
\end{proof}

\section{Beyond primitive substitutions}\label{sec: proof of characterization in uniform case}
The goal of this section is to prove Proposition~\ref{introprop: characterization of uniform case}.

Let $\sigma$ be a uniform substitution. Suppose there is a unique essential strongly connected component $G_0$ of $G_\sigma$, and let $\mathcal{A}_0$ denote the set of letters corresponding to the vertices of $G_0$. For each word $w = a_1\cdots a_m$ with $a_i \in \mathcal{A}$, write
\begin{equation}
	\rho(w) \ := \ \frac{\#\{ i \in \{1,\ldots, m\} : a_i \in \mathcal{A}\setminus \mathcal{A}_0\}}{m}.
\end{equation}

First, we need the following two lemmas, which assure us that letters from $\mathcal{A}_0$ predominate in words generated by $\sigma$, a fact which depends crucially on the uniformity of $\sigma$.

\begin{lemma}\label{lem: unif case, ess letters dominate sigma iterates}
	Let $\sigma$ be an $\ell$-uniform substitution such that $\ell > 1$ and there is a unique essential strongly connected component $G_0$ of $G_\sigma$. Then, for each $a \in \mathcal{A}$,
	\begin{equation}
		\lim_{n \to \infty} \rho(\sigma^n(a)) \ = \ 0.
	\end{equation}
\end{lemma}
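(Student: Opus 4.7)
The plan is to exploit two simple structural facts and combine them into a geometric recursion: (i) because $G_0$ is essential, the set $\mathcal{A}_0$ is forward-invariant under $\sigma$; and (ii) because $G_0$ is the \emph{unique} essential component, every letter eventually produces some $\mathcal{A}_0$-letter in a uniformly bounded number of iterations.

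First I would verify invariance: essentiality of $G_0$ means no edge of $G_\sigma$ leaves $\mathcal{A}_0$, so $\sigma(a) \in \mathcal{A}_0^\ell$ for every $a \in \mathcal{A}_0$, and inductively $\sigma^n(\mathcal{A}_0) \subseteq \mathcal{A}_0^{\ell^n}$. Next I would extract a uniform ``hitting time'' $N$. Since $G_0$ is the unique essential component, it is the unique sink of the condensation DAG of $G_\sigma$; hence every vertex of $G_\sigma$ admits a directed path into $\mathcal{A}_0$. By the invariance just established, if $\sigma^n(a)$ contains a letter of $\mathcal{A}_0$, then so does $\sigma^m(a)$ for every $m \geq n$. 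Finiteness of $\mathcal{A}$ therefore yields an $N \in \N$ such that $\sigma^N(a)$ contains at least one letter of $\mathcal{A}_0$ for every $a \in \mathcal{A}$.

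With these ingredients in hand, the core step is the recursion. Fix $a \in \mathcal{A}$ and write $\sigma^{kN}(a) = c_1 \cdots c_M$ with $M = \ell^{kN}$, so that $\sigma^{(k+1)N}(a) = \sigma^N(c_1)\cdots \sigma^N(c_M)$ decomposes into $M$ blocks of length $\ell^N$. If $c_i \in \mathcal{A}_0$, the corresponding block lies entirely in $\mathcal{A}_0^{\ell^N}$ by invariance and contributes $0$ letters outside $\mathcal{A}_0$; if $c_i \notin \mathcal{A}_0$, the block contains at least one letter of $\mathcal{A}_0$ and thus contributes at most $\ell^N - 1$ letters outside $\mathcal{A}_0$. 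Summing over $i$ and dividing by $\ell^{(k+1)N}$ gives
\[
\rho\bigl(\sigma^{(k+1)N}(a)\bigr) \ \leq \ \bigl(1 - \ell^{-N}\bigr)\,\rho\bigl(\sigma^{kN}(a)\bigr),
\]
and iterating yields $\rho(\sigma^{kN}(a)) \leq (1 - \ell^{-N})^k$.

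Finally, I would extend this from multiples of $N$ to all $n$. For $n = kN + r$ with $0 \leq r < N$, the identity $\sigma^n(a) = \sigma^r(\sigma^{kN}(a))$ together with the invariance of $\mathcal{A}_0$ shows that each $\mathcal{A}_0$-letter in $\sigma^{kN}(a)$ contributes only $\mathcal{A}_0$-letters in $\sigma^n(a)$, so the number of non-$\mathcal{A}_0$-letters in $\sigma^n(a)$ is at most $\ell^r$ times that in $\sigma^{kN}(a)$; dividing by $\ell^n$ yields $\rho(\sigma^n(a)) \leq \rho(\sigma^{kN}(a))$, and the right side tends to $0$ as $n \to \infty$. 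There is no serious obstacle here, but I would flag that uniformity of $\sigma$ is used essentially in the third step: in a non-uniform setting, a single non-$\mathcal{A}_0$ letter could produce a block whose length dwarfs the contributions of $\mathcal{A}_0$-letters and destroy the geometric decay.
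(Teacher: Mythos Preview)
Your proof is correct and follows essentially the same approach as the paper's: both extract a uniform hitting time (your $N$, the paper's $K$) at which every letter has produced at least one $\mathcal{A}_0$-letter, derive the contraction $\rho(\sigma^{(k+1)N}(a)) \leq (1-\ell^{-N})\rho(\sigma^{kN}(a))$ from invariance of $\mathcal{A}_0$ plus uniformity, and then pass from multiples of $N$ to all $n$ via the monotonicity $\rho(\sigma^r(w)) \leq \rho(w)$. Your explicit justification of the hitting time via the condensation DAG and your closing remark on where uniformity is used are nice additions but do not alter the structure of the argument.
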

\begin{proof} 
	By the assumption that $G_0$ exists and is unique, there exists $K \in \N$ such that, for all $a \in \mathcal{A}$, the word $\sigma^K(a)$ contains at least one letter from $\mathcal{A}_0$. Hence
	\begin{equation}\label{uniform sub maximum iness ratio}
		\max \{ \rho(\sigma^K(a)) : a \in \mathcal{A}\setminus \mathcal{A}_0 \} \ \leq \ \frac{\ell^K - 1}{\ell^K}.
	\end{equation}
	
	Let $w \in \mathcal{A}^+$, and let $E_w$ (resp. $I_w$) be the number of letters of $w$ that belong to $\mathcal{A}_0$ (resp. $\mathcal{A} \setminus \mathcal{A}_0$). By \eqref{uniform sub maximum iness ratio} and the fact that, for each $a \in \mathcal{A}_0$, the word $\sigma(a)$ consists entirely of letters from $\mathcal{A}_0$, it follows that
	\begin{multline}\label{uniform sub iness ratio contraction}
		\rho(\sigma^K(w)) \ \leq \ \frac{E_w\ell^K\cdot 0+I_w\ell^K \cdot \max\{ \rho(\sigma^K(a)): a \in \mathcal{A}\setminus \mathcal{A}_0  \} }{\len(w) \ell^K} \\ \leq \ \rho(w)\left(1-\frac{1}{\ell^K}\right).
	\end{multline}
	
	The lemma's conclusion is trivial to verify when $a \in \mathcal{A}_0$, so suppose $a \not\in \mathcal{A}_0$. For each $n \in \N$, write $\rho_n := \rho(\sigma^{n}(a))$. By \eqref{uniform sub maximum iness ratio}, $\rho_{K} \ \leq \ 1-1/\ell^K$. By \eqref{uniform sub iness ratio contraction}, 
	\begin{equation} \rho_{(n+1)K} \ \leq \ \left(1-\frac{1}{\ell^K}\right) \rho_{nK}
	\end{equation}
	for all $n \in \N$. Therefore, 
	\begin{equation}
		\lim_{n \to \infty} \rho(\sigma^{Kn}(a)) \ = \ 0.
	\end{equation}
	Moreover, since, for each $a \in \mathcal{A}_0$, the word $\sigma(a)$ consists entirely of letters from $\mathcal{A}_0$ by assumption on $G_0$, we have $\rho(\sigma(w)) \leq \rho(w)$ for all words $w$, hence
	\begin{equation}
		\lim_{n\to\infty} \rho(\sigma^n(a)) \ = \ 0
	\end{equation}
	as well.
\end{proof}

\begin{lemma}\label{lem: unif case, ess letters dominate general words}
	Let $\sigma$ be an $\ell$-uniform substitution such that $\ell > 1$ and there is a unique essential strongly connected component $G_0$ of $G_\sigma$. Then
	\begin{equation}
		\lim_{n\to\infty} \max_{w \in \mathcal{W}_n(\sigma)} \rho(w) \ = \ 0.
	\end{equation}
\end{lemma}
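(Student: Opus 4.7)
[Proof proposal for Lemma~\ref{lem: unif case, ess letters dominate general words}]
The plan is to reduce the statement about arbitrary subwords to the statement about full iterates $\sigma^m(a)$ proved in Lemma~\ref{lem: unif case, ess letters dominate sigma iterates}. The key point is that uniformity of $\sigma$ means every iterate $\sigma^m(c)$ has the same length $\ell^m$, so a long word $w \in \mathcal{W}_n(\sigma)$ can be cut into blocks of a common (large) length, with only a bounded number of boundary blocks.

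Fix $\varepsilon > 0$. By Lemma~\ref{lem: unif case, ess letters dominate sigma iterates} and the finiteness of $\mathcal{A}$, there exists $m = m(\varepsilon) \in \N$ such that $\rho(\sigma^m(a)) < \varepsilon/2$ for every $a \in \mathcal{A}$. Now suppose $n \geq 4\ell^m/\varepsilon$ and let $w \in \mathcal{W}_n(\sigma)$. By definition, $w \prec \sigma^k(a)$ for some $a \in \mathcal{A}$ and $k \in \N$; since $\len(\sigma^k(a)) = \ell^k \geq n \geq \ell^m$, we have $k \geq m$. Write $\sigma^{k-m}(a) = c_1 c_2 \cdots c_{\ell^{k-m}}$, so that
\begin{equation}
\sigma^k(a) \ = \ \sigma^m(c_1) \sigma^m(c_2) \cdots \sigma^m(c_{\ell^{k-m}})
\end{equation}
is a concatenation of blocks, each of the common length $\ell^m$.

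Since $w$ is a subword of $\sigma^k(a)$ of length $n$, it meets some consecutive run of these blocks. At most two of those blocks are only partially contained in $w$, contributing together at most $2\ell^m$ letters (inessential or not). The remaining, fully contained blocks are a collection of words of the form $\sigma^m(c)$, so by choice of $m$ each contributes at most $(\varepsilon/2)\ell^m$ inessential letters. Hence the total number of inessential letters of $w$ is bounded above by $(\varepsilon/2)n + 2\ell^m$, and therefore
\begin{equation}
\rho(w) \ \leq \ \frac{\varepsilon}{2} + \frac{2\ell^m}{n} \ \leq \ \varepsilon
\end{equation}
by our lower bound on $n$. Since this holds uniformly in $w \in \mathcal{W}_n(\sigma)$ once $n \geq 4\ell^m/\varepsilon$, and $\varepsilon$ was arbitrary, the stated limit follows.

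The only mildly delicate point is accounting for the two ``boundary'' blocks that $w$ can overlap partially; uniformity of $\sigma$ is essential here because it guarantees that the boundary contribution is at most $2\ell^m$, a quantity independent of which blocks are hit and negligible compared to $n$. Without uniformity, boundary blocks could have vastly different sizes and this clean division into equal blocks would fail.
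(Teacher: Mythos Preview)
Your proof is correct and follows essentially the same approach as the paper: decompose $w$ into full blocks of the form $\sigma^m(c)$ plus at most two boundary pieces of total length $\leq 2\ell^m$, then apply Lemma~\ref{lem: unif case, ess letters dominate sigma iterates} to the full blocks. The only cosmetic difference is that the paper lets the block level grow with $n$ (taking $k$ with $\ell^{2k} \leq n < \ell^{2k+2}$, so the boundary term is $O(1/\sqrt{n})$), whereas you fix the block level $m$ in terms of $\varepsilon$ and use an $\varepsilon$--$N$ argument; both choices give the same conclusion.
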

\begin{proof}
	Fix $w \in \mathcal{W}_n(\sigma)$. Let $m \in \N$ and $a \in \mathcal{A}$ be such that $w \prec \sigma^m(a)$. Let $k \in \N\cup\{0\}$ be such that $\ell^{2k} \leq n < \ell^{2k+2}$; then $m \geq 2k$. Let $x$ be a maximal subword of $\sigma^{m-k}(a)$ such that $\sigma^k(x) \prec w$. Then
	\begin{equation}
		\ell^{k}\len(x) \ \leq \ n \ \leq \ \ell^k(\len(x)+2).
	\end{equation}
	Let
	\begin{equation}
		\gamma_k \ := \ \max_{b \in \mathcal{A}} \rho(\sigma^k(b)).
	\end{equation}
	By Lemma~\ref{lem: unif case, ess letters dominate sigma iterates},
	\begin{equation}\label{gamma_k tends to zero}
		\lim_{k \to \infty} \gamma_k \ = \ 0.
	\end{equation}
	Let $Y_1$ and $Y_3$ be the possibly empty words of length at most $\ell^k$ such that
	\begin{equation}
		w \ = \ Y_1\sigma^k(x)Y_3.
	\end{equation}
	The number of letters in $w$ that do not belong to $\mathcal{A}_0$ is at most
	\begin{equation}
		\ell^k\len(x)\cdot \gamma_k + 2\ell^k \ \leq \ n\left(\gamma_k + \frac{2}{\sqrt n}\right),
	\end{equation}
	hence is $o(n)$ by \eqref{gamma_k tends to zero}. Thus, $\rho(w) = o(1)$ independent of $w$.
\end{proof}

Suppose $\sigma$ is a substitution such that there is a unique essential strongly connected component $G_0$ of $G_\sigma$, and $G_0$ is aperiodic. Let $\mathcal{A}_0 \subset \mathcal{A}$ denote the vertex set of $G_0$. Then the substitution obtained by restricting $\sigma$ to the domain $\mathcal{A}_0^+$ has irreducible aperiodic incidence matrix and is hence primitive. 

\begin{lemma}\label{lem: bounding edit distance between ess and iness generated words, unif}
	Let $\sigma$ be an $\ell$-uniform substitution such that $\ell > 1$ and there is a unique essential strongly connected component $G_0$ of $G_\sigma$, and $G_0$ is aperiodic. Let $\mathcal{A}_0 \subset \mathcal{A}$ denote the vertex set of $G_0$. Let $\tau$ denote the primitive substitution obtained by restricting $\sigma$ to the domain $\mathcal{A}_0^+$. Then
	\begin{equation}
		\max\{ \editfull(w,\tilde{w}) : w \in \mathcal{W}_n(\tau) \text{ and } \tilde{w} \in \mathcal{W}_n(\sigma) \setminus \mathcal{W}_n(\tau) \} \ = \ o(n).
	\end{equation}
\end{lemma}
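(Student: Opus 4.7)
\medskip
The plan is to show that every $\tilde w \in \mathcal{W}_n(\sigma)$ lies within edit distance $o(n)$ of some element of $\mathcal{W}_n(\tau)$. The lemma then follows by the triangle inequality together with Theorem~\ref{main theorem} applied to the primitive uniform substitution $\tau$, which gives $\mathrm{diam}_E(\mathcal{W}_n(\tau)) = o(n)$.

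The construction proceeds by induction on $n$. Fix $\tilde w \in \mathcal{W}_n(\sigma)$ with $\tilde w \prec \sigma^m(a)$, and choose a scale parameter $K = K(n)$ satisfying $K(n) \to \infty$ and $\ell^{K(n)} = o(n)$. Writing $\sigma^m(a) = \sigma^K(u)$ where $u = \sigma^{m-K}(a)$, the subword $\tilde w$ overlaps at most $r \le \lceil n/\ell^K\rceil + 1$ consecutive length-$\ell^K$ blocks $\sigma^K(c_i)$, and the corresponding super-letters form a word $u_0 \in \mathcal{W}_r(\sigma)$.

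By the inductive hypothesis at scale $r < n$, there is some $v_0 \in \mathcal{W}_r(\tau)$ with $\editfull(u_0, v_0) = o(r)$. Since $v_0 \in \mathcal{A}_0^+$ and $\sigma$ agrees with $\tau$ on $\mathcal{A}_0$ (because $G_0$ has no outgoing edges, so $\sigma$ preserves $\mathcal{A}_0^+$), we have $\sigma^K(v_0) = \tau^K(v_0) \in \mathcal{W}(\tau)$. Lifting any optimal edit sequence from $u_0$ to $v_0$ through $\sigma^K$ expands each insertion or deletion into $\ell^K$ operations on the expanded words, giving
\[
    \editfull\bigl(\sigma^K(u_0),\,\tau^K(v_0)\bigr) \ \le \ \ell^K \cdot \editfull(u_0,v_0) \ = \ o(r\ell^K) \ = \ o(n).
\]
Using the alignment between $\sigma^K(u_0)$ and $\tau^K(v_0)$ from this edit sequence, I would extract a length-$n$ substring $w' \prec \tau^K(v_0)$ whose positions correspond to those of $\tilde w$ within $\sigma^K(u_0)$; trimming or padding to reach length exactly $n$ incurs an additional cost of at most $2\,\editfull(\sigma^K(u_0),\tau^K(v_0)) = o(n)$ coming from the difference in alignment lengths. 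Then $w' \in \mathcal{W}_n(\tau)$ and $\editfull(\tilde w,w') = o(n)$, which completes the inductive step.

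The main obstacle is verifying that the induction propagates cleanly so the accumulated error remains $o(n)$ across the $O(\log\log n)$ levels of recursion required before $r$ becomes bounded: each level contributes an error of order $\ell^K/n$ plus the error at scale $r$, and one must choose $K(n)$ (for instance $K(n) = \lfloor\log_\ell\log n\rfloor$) so that both the per-level blow-up $\ell^K$ stays negligible relative to $n$ and the scale contracts rapidly enough that the per-level errors sum to $o(1)$. Careful bookkeeping of the substring alignment at each recursive level---in particular, tracking how the range $[s,s+n)$ of $\tilde w$ in $\sigma^K(u_0)$ is mapped by the edit sequence, and absorbing boundary shifts into the $o(n)$ term---is the delicate piece.
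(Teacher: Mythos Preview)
Your induction has a structural problem, not just a bookkeeping one. Tracing your recursion, you obtain (after lifting through $\sigma^K$) essentially
\[
\frac{\text{Error}(n)}{n} \ \le \ \frac{\text{Error}(r)}{r} \ + \ O\Bigl(\frac{\ell^K}{n}\Bigr),
\]
so the relative error is merely \emph{preserved} from scale $r$ to scale $n$, never reduced. There is nothing in the scheme that drives the ratio to zero: at the base of the recursion you can only use the trivial bound $\text{Error}(r_0) \le r_0$, and this propagates up to give $\text{Error}(n)/n \le 1 + o(1)$. An $o(n)$ conclusion cannot come out of such a recursion without some additional input that forces the relative error to be small at \emph{some} scale. (Separately, with $K(n) = \lfloor \log_\ell \log n \rfloor$ the iteration $n \mapsto n/\log n$ takes roughly $\log n/\log\log n$ steps to reach bounded size, not $O(\log\log n)$, so even the bookkeeping you flag does not close as stated.)

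The paper's argument is not recursive. It fixes a single scale $k$ with $\ell^{2k} \le n < \ell^{2k+2}$, decomposes both $w$ and $\tilde w$ into aligned blocks of length $\ell^k$, and splits the blocks of $\tilde w$ into two types. Blocks coming from a letter $\tilde b_i \in \mathcal{A}_0$ are handled by the bound $\editfull(B_i,\tilde B_i) \le \mathrm{diam}_E(\mathcal{W}_{\ell^k}(\tau)) = o(\ell^k)$ from Theorem~\ref{main theorem}. Blocks coming from $\tilde b_i \notin \mathcal{A}_0$ are bounded trivially by $\ell^k$, and the crucial point---which your proposal never invokes---is Lemma~\ref{lem: unif case, ess letters dominate general words}: the density of such bad letters in any word of $\mathcal{W}_r(\sigma)$ tends to zero, so these blocks contribute $o(n)$ in total. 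This density statement is exactly the missing mechanism that forces the error to be small; without it (or an equivalent), your inductive approach cannot get started.
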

\begin{proof}
	Fix $w \in \mathcal{W}_n(\tau)$ and $\tilde{w} \in \mathcal{W}_n(\sigma) \setminus \mathcal{W}_n(\tau)$. Let $m,\tilde{m} \in \N$, $a_0 \in \mathcal{A}_0$, and $\tilde{a}_0 \in \mathcal{A}\setminus \mathcal{A}_0$ be such that $w \prec \sigma^m(a_0)$ and $\tilde{w} \prec \sigma^{\tilde{m}}(\tilde{a}_0)$. Let $k \in \N \cup \{0\}$ be such that $\ell^{2k} \leq n < \ell^{2k+2}$; then $m \geq 2k$ and $\tilde{m} \geq 2k$. 
	
	Let $y$ be a maximal length subword of $\sigma^{m-k}(a_0)$ such that $\sigma^{k}(y) \prec w$. Then
	\begin{equation}\label{eqn 1 in lem: bounding edit distance between ess and iness generated words, unif}
		\frac{n}{\ell^k} - 2 \ \leq \ \len(y) \ \leq \ \frac{n}{\ell^k}.
	\end{equation}
	Similarly, let $\tilde{y}$ be a maximal length subword of $\sigma^{\tilde{m}-k}(\tilde{a}_0)$ such that $\sigma^{k}(\tilde{y})~\prec~\tilde{w}$. 
	
	Let $x$ (resp. $\tilde{x}$) consist of the first $q := \lceil n/\ell^k - 2\rceil$ letters of $y$ (resp. $\tilde{y}$). Then \eqref{eqn 1 in lem: bounding edit distance between ess and iness generated words, unif} holds with $\len(y)$ replaced by $q$. 
	
	Write $x = b_1 \cdots b_q$, where all $b_i \in \mathcal{A}_0$, and $\tilde{x} = \tilde{b}_1 \cdots \tilde{b}_{q}$, where $\tilde{b}_i \in \mathcal{A}$. For each $i \in \{1,\ldots, q\}$, put $B_i := \sigma^{k}(b_i)$ and $\tilde{B}_i := \sigma^{k}(\tilde{b}_i)$. Let $Y_1,Y_3, \tilde{Y}_1,\tilde{Y}_3$ be the (possibly empty) words of length at most $\ell^{k}$ such that
	\begin{equation}
		w \ = \ Y_1B_1 \cdots B_qY_3
	\end{equation}
	and
	\begin{equation}
		\tilde{w} \ = \ \tilde{Y}_1\tilde{B}_1 \cdots \tilde{B}_{q}\tilde{Y}_{3}.
	\end{equation}
	Then trivially estimate
	\begin{equation}
		\editfull(Y_1,\tilde{Y}_1) + \editfull(Y_3,\tilde{Y}_3) \ \leq \ 2\ell^{k}.
	\end{equation}
	
	Let $I \subset \{1,\ldots, q\}$ be defined by $i \in I$ if and only if $\tilde{b}_i \not\in \mathcal{A}_0$. For each $i \in I$, again trivially estimate
	\begin{equation}
		\editfull(B_i,\tilde{B}_i) \ \leq \ \ell^{k}.
	\end{equation}
	
	For each $i \in \{1,\ldots, q\} \setminus I$, observe the slightly less trivial estimate
	\begin{equation}
		\editfull(B_i,\tilde{B}_i) \ \leq \ \mathrm{diam}_E(\mathcal{W}_{\ell^{k}}(\tau)).
	\end{equation}
	Therefore,
	\begin{multline}\label{eqn 1.5 in lem: bounding edit distance between ess and iness generated words, unif}
		\editfull(w,\tilde{w}) \ \leq \ (2+|I|)\ell^{k} + (q - |I|)\mathrm{diam}_E(\mathcal{W}_{\ell^{k}}(\tau)) \\
		\leq \ (2+|I|)\ell^{k} + q\cdot \mathrm{diam}_E(\mathcal{W}_{\ell^{k}}(\tau)).
	\end{multline}
	Now, since $q \leq n/\ell^k$, it follows by Lemma~\ref{lem: unif case, ess letters dominate general words} that
	\begin{equation}
		\max_{\tilde{w} \in \mathcal{W}_n(\sigma) \setminus \mathcal{W}_n(\tau)} |I| \ = \  o(n/\ell^k),
	\end{equation}
	hence
	\begin{equation}\label{eqn 2 in lem: bounding edit distance between ess and iness generated words, unif} 
		\max_{\substack{w \in \mathcal{W}_n(\sigma), \\ \tilde{w} \in \mathcal{W}_n(\sigma) \setminus \mathcal{W}_n(\tau)}} (2+|I|)\ell^{k} \ = \ o(n).
	\end{equation}
	By Theorem~\ref{main theorem}, $\mathrm{diam}_E(\mathcal{W}_{\ell^{k}}(\tau)) = o(\ell^k)$. Since $q \leq n/\ell^k$,
	\begin{equation}\label{eqn 3 in lem: bounding edit distance between ess and iness generated words, unif}
		\max_{\substack{w \in \mathcal{W}_n(\sigma), \\ \tilde{w} \in \mathcal{W}_n(\sigma) \setminus \mathcal{W}_n(\tau)}} q\cdot \mathrm{diam}_E(\mathcal{W}_{\ell^{k}}(\tau)) \ = \ o(n).
	\end{equation}
	Combining \eqref{eqn 1.5 in lem: bounding edit distance between ess and iness generated words, unif}, \eqref{eqn 2 in lem: bounding edit distance between ess and iness generated words, unif}, and \eqref{eqn 3 in lem: bounding edit distance between ess and iness generated words, unif} proves the lemma.
\end{proof}

Now we are ready to prove the proposition, which we first restate: 
	\addtocounter{introtheorem}{-2}
	\begin{introproposition}
		Let $\sigma$ be an $\ell$-uniform substitution with $\ell > 1$.
		\begin{enumerate}[label=(\alph*)]
			\item If there is a unique essential strongly connected component $G_0$ of $G_\sigma$, and $G_0$ is aperiodic, then $\mathrm{diam}_E(\mathcal{W}_n(\sigma)) \ = \ o(n)$.
			\item Otherwise, $\mathrm{diam}_E(\mathcal{W}_n(\sigma)) \ = \ n$ for all $n \in \N$.
		\end{enumerate}
	\end{introproposition}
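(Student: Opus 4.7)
Both parts follow by assembling the lemmas already established in this section; I would treat them separately.

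For part (a), let $\tau$ denote the restriction of $\sigma$ to $\mathcal{A}_0^+$. Aperiodicity and strong connectedness of $G_0$ make the incidence matrix of $\tau$ irreducible and aperiodic, hence primitive, so $\tau$ is a primitive $\ell$-uniform substitution. Theorem~\ref{main theorem} applied to $\tau$ gives $\mathrm{diam}_E(\mathcal{W}_n(\tau)) = o(n)$, and $\mathcal{W}_n(\tau)$ is nonempty for every $n$ since $\len(\tau^k(a)) = \ell^k \to \infty$ for any $a \in \mathcal{A}_0$. For arbitrary $w,\tilde{w} \in \mathcal{W}_n(\sigma)$, I would pick a fixed pivot $w_0 \in \mathcal{W}_n(\tau)$ and apply the triangle inequality $\editfull(w,\tilde{w}) \leq \editfull(w,w_0) + \editfull(w_0,\tilde{w})$: each summand is either internal to $\mathcal{W}_n(\tau)$ (controlled by Theorem~\ref{main theorem}) or crosses $\mathcal{W}_n(\tau)$ and its complement in $\mathcal{W}_n(\sigma)$ (controlled by Lemma~\ref{lem: bounding edit distance between ess and iness generated words, unif}); both bounds are $o(n)$.

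For part (b), I would exhibit, for each $n$, two words in $\mathcal{W}_n(\sigma)$ whose alphabets are disjoint subsets of $\mathcal{A}$; their longest common subsequence then has length $0$, so \eqref{eqn: lcs and edit distance relation} forces their edit distance to equal $n$. The failure of the hypothesis of (a) splits into two cases. In case (b1), there exist two distinct essential strongly connected components with vertex sets $\mathcal{A}_0$ and $\mathcal{A}_0'$ (necessarily disjoint since they are different SCCs); essentiality gives $\sigma^k(a) \in \mathcal{A}_0^+$ for every $a \in \mathcal{A}_0$ and $k \geq 1$, and analogously for $\mathcal{A}_0'$, so for $k$ large enough that $\ell^k \geq n$ the length-$n$ prefixes of $\sigma^k(a)$ (with $a \in \mathcal{A}_0$) and $\sigma^k(b)$ (with $b \in \mathcal{A}_0'$) lie in $\mathcal{W}_n(\sigma)$ and use disjoint alphabets. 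In case (b2), there is a unique essential SCC $G_0$ with common period $p \geq 2$; the standard cyclic decomposition of a periodic strongly connected digraph partitions $\mathcal{A}_0$ into classes $C_0,\ldots,C_{p-1}$ such that every edge from $C_i$ lands in $C_{i+1 \bmod p}$, and hence for any fixed $a \in C_0$ and any $j \geq 1$, every letter of $\sigma^j(a)$ lies in $C_{j \bmod p}$. Choosing $k$ with $\ell^k \geq n$, the length-$n$ prefixes of $\sigma^k(a)$ and $\sigma^{k+1}(a)$ inhabit the disjoint classes $C_{k \bmod p}$ and $C_{(k+1) \bmod p}$.

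There is no real obstacle beyond assembling the pieces; the only items requiring care are the existence of the pivot word in (a) (immediate from primitivity of $\tau$) and the standard cyclic decomposition of a periodic strongly connected digraph invoked in case (b2).
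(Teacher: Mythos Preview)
Your proposal is correct and matches the paper's proof: part~(a) is exactly the paper's argument (Theorem~\ref{main theorem} for the primitive restriction $\tau$, Lemma~\ref{lem: bounding edit distance between ess and iness generated words, unif} for the cross terms, then the triangle inequality), and part~(b) follows the same two-case split into multiple essential components versus a periodic unique essential component, producing words with disjoint alphabets. The only cosmetic difference is in case~(b2), where you invoke the cyclic decomposition of $G_0$ and compare $\sigma^k(a)$ with $\sigma^{k+1}(a)$, whereas the paper passes to $G_{\sigma^{p_0}}$ and compares $\sigma^{kp_0}(a)$ with $\sigma^{kp_0}(b)$ for $a,b$ in different components; these are equivalent viewpoints on the same periodic structure.
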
	

\begin{proof}
	Suppose there is a unique essential strongly connected component $G_0$ of $G_\sigma$, and $G_0$ is aperiodic. Let $\tau$ denote the primitive substitution obtained by restricting $\sigma$ to the domain $\mathcal{A}_0^+$.
	By Theorem~\ref{main theorem},
	\begin{equation}
		\max_{\substack{w \in \mathcal{W}_n(\tau), \\ \tilde{w} \in \mathcal{W}_n(\tau)}} \editfull(w,\tilde{w}) \ = \ o(n).
	\end{equation}
	By Lemma~\ref{lem: bounding edit distance between ess and iness generated words, unif},
	\begin{equation}
		\max_{\substack{w \in \mathcal{W}_n(\tau), \\ \tilde{w} \in \mathcal{W}_n(\sigma) \setminus \mathcal{W}_n(\tau)}} \editfull(w,\tilde{w}) \ = \ o(n).
	\end{equation}
	By the triangle inequality, these together imply $\mathrm{diam}_E(\mathcal{W}_n(\sigma)) \ = \ o(n)$.
	
	Suppose the statement ``there is a unique essential strongly connected component $G_0$ of $G_\sigma$, and $G_0$ is aperiodic'' is false. Then, because the decomposition of $G_\sigma$ into strongly connected components must have an essential component, either there are at least two essential strongly connected components $G_0$ and $G_1$ of $G_\sigma$, or $G_0$ is not aperiodic.
	
	In the first case, let $a \in \mathcal{A}$ (resp. $b \in \mathcal{A}$) be a letter in the vertex set of $G_0$ (resp. $G_1$), and observe that, for all $k \in \N$,
	\begin{equation}
		\editfull(\sigma^k(a),\sigma^k(b)) \ = \ \ell^k
	\end{equation}
	since $\sigma^k(a)$ (resp. $\sigma^k(b)$) consists solely of letters from the vertex set of $G_0$ (resp. $G_1$).
	
	In the second case, that $G_0$ is an essential strongly connected component that is not aperiodic, let $p_0 > 1$ be the period. Then the vertex set of $G_0$ has more than one element. In particular, regarding the decomposition of $G_{\sigma^{p_0}}$ into strongly connected components, there will be at least two such components whose vertex sets are disjoint and contained in the vertex set of $G_0$. Taking a letter $a$ from one such component and a letter $b$ from another such component, we observe that, for all $k \in \N$,
	\begin{equation}
		\editfull(\sigma^{kp_0}(a),\sigma^{kp_0}(b)) \ = \ \ell^{kp_0}.
	\end{equation}
	
	In either case, we obtain
	\begin{equation}
		\mathrm{diam}_E(\mathcal{W}_n(\sigma)) \ = \ n
	\end{equation}
	for all $n \in \N$. 
\end{proof}

\section{Lower bound on edit distance for Thue--Morse}
\label{sec: proof of edit distance lower bound}

In this section, we prove Proposition~\ref{introprop: lower bound on edit distance for morse}.

The following lemma is due to Thue \cite{thue1,thue2}.
\begin{lemma}\label{lem: properties of mu}
	The Thue--Morse sequence $\mathbf x$ does not contain any subword of the form $www$ or $\xi w\xi w\xi $, where $w \in \{0,1\}^+$ and $\xi \in \{0,1\}$.
\end{lemma}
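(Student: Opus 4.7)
The plan rests on the self-similar identities $x_{2k} = x_k$ and $x_{2k+1} = \bar{x}_k$, immediate from the definition of $\mathbf{x}$. First I would reduce the cube statement to the overlap statement together with a no-$\xi\xi\xi$ claim: if $www$ is a subword with $w = \xi u$ and $u \ne \varnothing$, then the prefix $\xi u \xi u \xi$ of $www$ is an overlap of the stated form with $|u| \geq 1$, and the remaining case $|w| = 1$ is precisely $\xi \xi \xi$. So it suffices to prove (i) $\mathbf{x}$ contains no $\xi\xi\xi$, and (ii) $\mathbf{x}$ contains no $\xi w \xi w \xi$ with $|w| \geq 1$.

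Part (i) is a direct parity check: $x_j x_{j+1} x_{j+2}$ equals $x_k \bar{x}_k x_{k+1}$ when $j = 2k$ and $\bar{x}_k x_{k+1} \bar{x}_{k+1}$ when $j = 2k+1$; in the first form the first two letters disagree, in the second form the last two do, so neither can equal $\xi\xi\xi$.

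For (ii), I would argue by minimal counterexample: suppose $U = \xi w \xi w \xi$ with $n := |w| \geq 1$ occurs in $\mathbf{x}$ at some position $p$, with $n$ as small as possible. The period-$(n+1)$ relations $x_{p+i} = x_{p+i+n+1}$ for $i = 0, 1, \ldots, n+1$ translate, via $x_{2k}=x_k$ and $x_{2k+1}=\bar{x}_k$, into relations among $x_k, x_{k+1}, \ldots$ with $k = \lfloor p/2 \rfloor$. I would case-split on $p \bmod 2$ and $n \bmod 2$. When $n$ is odd, $p+i$ and $p+i+n+1$ always share parity, so both parity families of relations collapse uniformly to $x_{k+j} = x_{k+j+(n+1)/2}$ for $j = 0, 1, \ldots, (n+1)/2$; this exhibits a factor of $\mathbf{x}$ of length $n+2$ with period $(n+1)/2$, i.e., an overlap $\xi' w' \xi' w' \xi'$ with $|w'| = (n-1)/2 < n$, contradicting minimality (the degenerate case $n=1$ gives $|w'|=0$, i.e., $\xi'\xi'\xi'$, contradicting (i)). When $n$ is even (so $n\geq 2$), $p+i$ and $p+i+n+1$ always have opposite parities, and the two parity families yield relations of the shape $x_{k+j} = \bar{x}_{k+j+n/2}$ and $x_{k+j} = \bar{x}_{k+j+n/2+1}$ on overlapping ranges of $j$; subtracting forces $x_{k+n/2} = x_{k+n/2+1} = \cdots = x_{k+n+1}$, a run of at least three equal consecutive letters, contradicting (i).

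The main obstacle is the bookkeeping in the four $(p\bmod 2)\times(n\bmod 2)$ subcases: in each one must track carefully how the period-$(n+1)$ relation in $U$ descends to relations among the $x_k$'s, keeping correct account of the index ranges and of the bar operations produced by $x_{2k+1} = \bar{x}_k$. The identity $\mathbf{x} = \mu(\mathbf{x})$ is doing the work implicitly: the dichotomy ``odd $n$ halves the overlap size; even $n$ produces a long constant run'' reflects whether $U$ is aligned with, or shifted by one against, the $\mu$-blocks $\mu(0)=01$ and $\mu(1)=10$ that tile $\mathbf{x}$.
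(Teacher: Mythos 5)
Your plan is correct in substance, but it takes a genuinely different route from the paper. The paper does not prove the full lemma at all: it attributes cube- and overlap-freeness to Thue \cite{thue1,thue2} and proves only the special case it actually needs (that $01010$ and $10101$ are not subwords), via the observation that $x_n \equiv s_2(n) \bmod 2$ where $s_2(n)$ is the binary digit sum, together with a two-line check that $s_2(n), s_2(n+2), s_2(n+4)$ cannot all have the same parity. Your proposal instead reconstructs the classical self-contained proof of overlap-freeness: reduce cubes to overlaps plus no-$\xi\xi\xi$, kill $\xi\xi\xi$ by parity of the pair $(x_{2k},x_{2k+1})$, and run a descent on a minimal overlap using $x_{2k}=x_k$, $x_{2k+1}=\overline{x}_k$ (odd period halves the overlap; even period forces a constant run). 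This is essentially Thue's original argument; what it buys is a complete proof of the stated lemma, at the cost of the four-way parity bookkeeping, whereas the paper's parity-of-$s_2$ trick buys only the special case but in two lines, which is all the later lemmas use.

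One detail in your even-$n$ case needs repair. Your claimed conclusion $x_{k+n/2}=x_{k+n/2+1}=\cdots=x_{k+n+1}$ comes from subtracting the two families at the \emph{same} index, which works when the overlap starts at an even position $p=2k$ (the two families are $x_{k'}=\overline{x}_{k'+n/2}$ and $x_{k'}=\overline{x}_{k'+n/2+1}$, both for $k'=k,\dots,k+n/2$). When $p$ is odd, the ranges are offset by one, and aligned subtraction yields only $n/2$ equalities; for $n=2$ that is a single equality and no run of three. The fix is to pair the relation at $k'$ from one family with the relation at $k'+1$ from the other: this gives $x_{k'}=x_{k'+1}$ for all $k'=k,\dots,k+n/2$, i.e.\ a run of length $n/2+2\geq 3$, and the contradiction with part (i) is restored. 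With that adjustment the argument goes through in all four parity subcases.
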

For the reader's convenience, we explain the special case we need, namely, that $\mathbf{x}$ does not contain the subwords $10101$ or $01010$. Let $s_2(n)$ be the number of $1$'s in the binary expansion of $n \in \N \cup \{0\}$. Then $s_2(0) = 0$, and for all $n \in \N \cup \{0\}$, one has $s_{2}(2n) = s_2(n)$ and $s_2(2n+1) = s_2(n)+1$. Hence, we have $x_n \equiv s_2(n) \bmod 2$ for all $n$, so it suffices to show that $s_2(i)$ for $i = n, n+2, n+4$ are not all of the same parity modulo 2. Indeed, observe that either $n \bmod 4$ or $n+2 \bmod 4$ belong to $\{0,1\}$. In the first case, $s_2(n+2) = s_2(n) + 1$, and in the second case,   $s_2(n+4) = s_2(n+2)+1$, as desired.

We now relate the length of a common subword of $\mathbf{x}$ and $\overline{\mathbf{x}}$ to the relative shift  between the locations it appears in each of them.

\begin{lemma}\label{lem: perfect match in TM requires shift at least 1/3 of word length, base}
	Let $1 \leq k \leq 4$ and $a \geq 0$. Suppose $w = x_{[a,a+k)} = \overline{x_{[b,b+k)}}$, where $b := a + s$ for some $s > 0$. Then $s \geq k/3$. 
\end{lemma}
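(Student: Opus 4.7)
The plan is to observe that since $s$ must be a positive integer and the bound $s \geq k/3$ is only nontrivial for $k = 4$ (in which case it demands $s \geq 2$), the lemma reduces to a single case.

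For $k \in \{1,2,3\}$, we have $k/3 \leq 1$, so the conclusion $s \geq k/3$ follows immediately from the hypothesis $s > 0$ together with the fact that $s$ is a nonnegative integer. Thus only $k = 4$ requires genuine work, and there it suffices to rule out $s = 1$.

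Suppose for contradiction that $k = 4$ and $s = 1$. Then the equation $x_{[a,a+4)} = \overline{x_{[a+1,a+5)}}$ gives the four relations $x_{a+i} = \overline{x}_{a+i+1}$ for $i = 0,1,2,3$. Consequently $x_a,x_{a+1},x_{a+2},x_{a+3},x_{a+4}$ strictly alternate, so that $x_{[a,a+5)}$ equals either $01010$ or $10101$. Either of these is a subword of the form $\xi w \xi w \xi$ with $\xi \in \{0,1\}$ and $w = \overline{\xi}$, which is forbidden by Lemma~\ref{lem: properties of mu}. This contradiction forces $s \geq 2 \geq k/3$, completing the proof.

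There is no real obstacle here; the only thing to be slightly careful about is noting that the parameter $s$ is implicitly an integer (inherited from the indexing of $\mathbf{x}$), so that $s > 0$ upgrades to $s \geq 1$, and that the Thue-Morse avoidance property cited after Lemma~\ref{lem: properties of mu} is exactly what disposes of the $k = 4$ case.
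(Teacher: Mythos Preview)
Your proof is correct and follows essentially the same approach as the paper: the cases $k \le 3$ are dismissed as trivial, and for $k=4$ the assumption $s=1$ is shown to force a forbidden alternating block in $\mathbf{x}$, contradicting Lemma~\ref{lem: properties of mu}. Your execution is in fact a bit more streamlined than the paper's, which first enumerates the possible length-$4$ words $w$ and observes that $s=1$ is immediately excluded whenever $00$ or $11$ occurs in $w$, leaving only $w=0101$ to handle; you instead directly derive the alternation $x_{[a,a+5)} \in \{01010,10101\}$ from the four relations $x_{a+i}=\overline{x_{a+i+1}}$, which subsumes that case analysis.
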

\begin{proof}
	The cases $k = 1,2,3$ are trivial.
	
	When $k = 4$, the possible words $w$ are, without loss of generality, one of 0110, 0010, 0011, 0100, 0101. (The other five words are bitwise complements of these.) We must verify that $ s \ne 1$. This is clear if $00 \prec w$ or $11 \prec w$, so it remains to handle $w = 0101$. If $s=1$, then $1 = x_{a +3} = \overline{x_{a+4}}$, so $x_{a+4} = 0$ and thus $01010 \prec \mathbf{x}$, contradicting Lemma~\ref{lem: properties of mu}.
\end{proof}

\begin{lemma}\label{lem: perfect match in TM requires shift at least 1/3 of word length}
	Let $k \geq 1$ and $a \geq 0$. Suppose $ x_{[a,a+k)} = \overline{x_{[b,b+k)}}$, where $b := a + s$ for some $s \in \Z \setminus \{0\}$. Then $|s| \geq k/3$.
\end{lemma}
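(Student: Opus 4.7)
The plan is to give a direct contradiction with Thue's forbidden-patterns lemma (Lemma~\ref{lem: properties of mu}), rather than trying to induct from Lemma~\ref{lem: perfect match in TM requires shift at least 1/3 of word length, base}. First, by interchanging $a$ and $b$ and complementing both sides of the identity (which turns a shift $s$ into a shift $-s$), I may assume $s > 0$; since $s \in \Z$, this gives $s \geq 1$. I then argue by contradiction: assume $s < k/3$, which for integers amounts to $k \geq 3s + 1$.

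The heart of the argument is to apply the hypothesis twice. The hypothesis gives $x_{a+i} = \overline{x_{a+s+i}}$ for every $0 \leq i < k$. Provided $i + s < k$, I can apply the same identity at the shifted index $i + s$ to get $x_{a+s+i} = \overline{x_{a+2s+i}}$. Chaining the two double-complementations then shows that $x_{a+i} = x_{a+2s+i}$ for all $0 \leq i < k - s$. In other words, the length-$(k+s)$ subword $x_{[a,\,a+k+s)}$ of $\mathbf{x}$ has period $2s$.

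To finish, I use $k + s \geq 4s + 1$ and isolate the first $4s + 1$ letters of this periodic block: by $2s$-periodicity it takes the form $\xi w \xi w \xi$, with $\xi = x_a$ and $w = x_{[a+1,\,a+2s)}$, a nonempty word since $|w| = 2s - 1 \geq 1$. This is precisely a factor forbidden by Lemma~\ref{lem: properties of mu}, giving the required contradiction.

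The main obstacle is the index bookkeeping: one has to verify that the second application of the complementation identity is valid throughout $0 \leq i < k - s$ and that the resulting $2s$-periodic block has length at least $4s + 1$, so that a $\xi w \xi w \xi$ factor with nonempty $w$ actually fits inside. Once those ranges are pinned down, the appeal to Thue's theorem is immediate. An alternative proof strategy---an induction on $k$ based on Lemma~\ref{lem: perfect match in TM requires shift at least 1/3 of word length, base} together with the recursions $x_{2n} = x_n$ and $x_{2n+1} = \overline{x_n}$, split according to the parities of $a$ and $b$---would also work, but the direct route above is shorter and avoids the parity case analysis (the different-parity case, where the halved problem becomes an equal-subword rather than complementary-subword problem, being the most delicate piece of that alternative).
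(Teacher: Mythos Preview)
Your proof is correct and takes a genuinely different route from the paper's. The paper argues by induction on $k$: for $k\ge 5$ it uses the absence of $01010$ and $10101$ in $\mathbf{x}$ to locate a $00$ or $11$ block, deduces that $s$ is even, adjusts $a$ to be even, and then invokes $x_n=x_{2n}$ to halve $a$, $b$, and (essentially) $k$, reducing to the inductive hypothesis. Your argument instead applies the complementation hypothesis twice to obtain $2s$-periodicity of the length-$(k+s)\ge 4s+1$ block $x_{[a,a+k+s)}$, and then reads off a forbidden $\xi w\xi w\xi$ factor with $|w|=2s-1\ge 1$.

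The trade-off is clean: your approach is shorter and entirely avoids the parity bookkeeping and the induction, but it consumes the full overlap-free theorem (Lemma~\ref{lem: properties of mu} for arbitrary $w$), whereas the paper's inductive proof only needs the special case that $01010$ and $10101$ are absent, which the paper verifies directly from the digit-sum description of $\mathbf{x}$. Since Lemma~\ref{lem: properties of mu} is stated in full generality in the paper, your reliance on it is legitimate; but if one wanted a fully self-contained argument, the paper's route has the lighter prerequisites.
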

\begin{proof}
	We may suppose that $s > 0$, since if $x_{[a,a+k)} \ = \ \overline{x_{[b,b+k)}}$ for some $a>b$, then taking bitwise complements would switch the roles of $a$ and $b$.  
	
	We induct on $k$. Lemma~\ref{lem: perfect match in TM requires shift at least 1/3 of word length, base} handles the cases $1 \leq k \leq 4$, so we may assume that $k \geq 5$ and the conclusion has been shown for $k' \in [1,k)$. By Lemma~\ref{lem: properties of mu}, the word $w := x_{[a,a+k)}$ must contain 00 or 11 as a subword. For concreteness, suppose $w$ contains the subword $x_{a'}x_{a'+1} = 11$, where $a' \in \{a,\ldots, a+3\}$. By the recursive definition of $\mathbf{x}$, every subword 00 or 11 of $\mathbf{x}$ or $\overline{\mathbf{x}}$ must begin at an odd index, so $a'$ is odd and $s$ is even.
	
	If $a$ is odd, then $x_{a-1} = \overline{x_a} = x_b = \overline{x_{b-1}}$, so we may replace $a$ and $b$ by $a-1$ and $b-1$ respectively;  thus we may assume that $a$ and $b$ are even.
	Since
	\begin{equation} \label{definition of morse} 
		\forall n\in \N, \quad x_n \ = \ x_{2n},
	\end{equation}
	we  deduce that
	\begin{equation}
		x_{[a/2,a/2+k')} \ = \ \overline{x_{[b/2,b/2+k')}} \,,
	\end{equation}
	where $k' :=\lfloor(k+1)/2 \rfloor $. 
	By the inductive hypothesis   $s/2 \geq k'/3 \ge k/6$, and the inductive step is complete.
\end{proof}

Now we prove Proposition~\ref{introprop: lower bound on edit distance for morse}.

\begin{proof}[Proof of Proposition~\ref{introprop: lower bound on edit distance for morse}]
	Fix $n$. Let $\Delta := \editfull(x_{[0,n)},\overline{x_{[0,n)}})$ and $m := n - \Delta$. Fix strictly increasing maps $\pi,\tau : \{1,\ldots, m\} \to \{0,\ldots, n-1\}$ such that $x_{\pi(i)} = \overline{x_{\tau(i)}}$ for each $i$. Let
	\begin{equation}
		S_{\pi} \ := \ \bigl\{ i \in \{1,\ldots,m-1\} : \pi(i+1) > \pi(i) + 1\bigr\}
	\end{equation} 
	and define $S_{\tau}$  similarly. Note that $|S_{\pi}| \leq \Delta$, so  $S_* := S_{\pi} \cup S_{\tau}$ satisfies $|S_*| \leq 2\Delta$. Next, observe that the longest run in $\{1,\ldots,m-1\} \setminus S_*$ has length at least
	\begin{equation}
		\frac{m-1-|S_*|}{|S_*|+1} \ \geq \ \frac{m-1-2\Delta}{2\Delta + 1}.
	\end{equation}
	Thus, there exist integers $a,s,k$ with $a$ nonnegative and 
	\begin{equation}
		k \ \geq \ \frac{m-1-2\Delta}{2\Delta + 1} + 1 \ = \ \frac{n-\Delta}{2\Delta+1}
	\end{equation} 
	such that $x_{[a,a+k)} = \overline{x_{[a+s,a+s+k)}}$. By Lemma~\ref{lem: perfect match in TM requires shift at least 1/3 of word length}, we have $|s| \geq k/3$ and hence $t := \max_{1\leq i \leq m} |\pi(i)-\tau(i)|$ satisfies $t \geq k/3$.
	
	Then, since $\pi(i) - i$ is nondecreasing, we have, for all $i$, 
	\begin{equation}
		-1 \ \leq \ \pi(i) - i \ \leq \ \pi(m) - m \ \leq \ \Delta - 1
	\end{equation}
	and similarly for $\tau$, hence $\pi(i) - \tau(i) \leq \Delta$. Therefore, $t \leq \Delta$, hence
	\begin{equation}
		\Delta \ \geq \ \frac{n-\Delta}{6\Delta + 3},
	\end{equation}
	whence $\Delta \geq \sqrt{\frac{n}{6}}  -1$.
\end{proof}

\subsection{Open problem: Determining growth of edit distance}

By Proposition~\ref{introprop: lower bound on edit distance for morse} and \eqref{Blikstad bound on bb}, there exist constants $c,\tilde{C}$ such that
\begin{equation}\label{inequalities on edit distance of length n prefix}
	c\sqrt{n}\ \leq \ \editfull(x_{[0,n)},\overline{x_{[0,n)}}) \ \leq \ \tilde{C} \frac{n\sqrt{\log n}}{2^{\beta\sqrt{\log n}} }.
\end{equation}
It remains to compute the true growth rate. Hence one may first ask, what are the values of 
\begin{equation}
	\limsup_{n \to \infty} \frac{\log \editfull\bigl(x_{[0,n)},\overline{x_{[0,n)}}\bigr)}{\log n}  \ \in \ [1/2,1]
\end{equation}
and the corresponding $\liminf$? In particular, are these equal?

The sequence $\bb{n} := n - \editfull(x_{[0,n)},\overline{x_{[0,n)}})$ is discussed  in the OEIS~\cite{oeis}.

More generally, one can ask: Given a primitive substitution $\sigma$, does
\begin{equation}
	\lim_{n \to \infty} \frac{\log \mathrm{diam}_E(\mathcal{W}_n(\sigma))}{\log n}
\end{equation}
exist?
\\ \medskip \\
\noindent \textbf{Acknowledgment.} \quad We are grateful to a referee for useful references.

\end{document}